% ! encoding utf-8
% £¡ pdflatex
\documentclass{aims}
\usepackage[mathscr]{euscript}
\usepackage{verbatim}
\usepackage{color}
\usepackage[all]{xy}
\usepackage{amsmath}
\usepackage{paralist}
\usepackage{graphics} %% add this and next lines if pictures should be in esp format
\usepackage{epsfig} %For pictures: screened artwork should be set up with an 85 or 100 line screen
\usepackage{graphicx}
\usepackage{epstopdf}%This is to transfer .eps figure to .pdf figure; please compile your paper using PDFLeTex or PDFTeXify.
\usepackage[colorlinks=true]{hyperref}
   % Warning: when you first run your tex file, some errors might occur,
   % please just press enter key to end the compilation process, then it will be fine if you run your tex file again.
   % Note that it is highly recommended by AIMS to use this package.
\hypersetup{urlcolor=blue, citecolor=red}

\textheight=8.2 true in
\textwidth=5.0 true in
\topmargin 30pt
 \setcounter{page}{1}

% The next 5 line will be entered by an editorial staff.

\theoremstyle{plain}
\newtheorem{thm}{Theorem}[section]
\newtheorem{lem}[thm]{Lemma}
\newtheorem{prop}[thm]{Proposition}
\newtheorem{cor}[thm]{Corollary}
\newtheorem{claim}{Claim}
\theoremstyle{definition}
\newtheorem{rem}[thm]{Remark}
\newtheorem{defn}[thm]{Definition}
%\sloppy

\newcommand{\N}{\mathbb{N}}
\newcommand{\F}{\mathscr{F}}
\newcommand{\B}{\mathscr{B}}

\newcommand{\PP}{\mathscr{P}}
\newcommand{\A}{\mathscr{A}}

\DeclareMathOperator{\supp}{supp}
\DeclareMathOperator{\Orb}{Orb}
\DeclareMathOperator{\Trans}{Trans}
\DeclareMathOperator{\Perf}{Perf}
\DeclareMathOperator{\diam}{diam}
\newcommand{\pubd}{\mathrm{pubd}}
\newcommand{\ps}{\mathrm{ps}}
\newcommand{\ip}{\mathrm{ip}}
\newcommand{\htop}{h_{\mathrm{top}}}

\begin{document}
\title{Localization of mixing property via Furstenberg families}

\author[Jian Li]{}%{Jian Li}
\email{lijian09@mail.ustc.edu.cn}

\thanks{The author was supported in part by Scientific Research Funds of Shantou University (YR13001),
Guangdong Natural Science Foundation (S2013040014084) and NNSF of China (11401362, 11471125).}

\keywords{Mixing property, entropy sets, minimal systems, Furstenberg families}
\subjclass{54H20, 37B05, 37B40}
\maketitle

\centerline{\scshape Jian Li}
\medskip
{\footnotesize
% please put the address of the first author
 \centerline{Department of Mathematics, Shantou University, Shantou,}
%   \centerline{Other lines}
   \centerline{Guangdong 515063, P.R. China}
} % Do not forget to end the {\footnotesize by the sign }

\bigskip

% The name of the associate editor will be entered by an editorial staff
% "Communicated by the associate editor name" is not needed for special issue.
% \centerline{(Communicated by the associate editor name)}
%\tableofcontents
%%%%%%%%%%%%%%%%%%%%%%%%%%%%%%%%%%%%%%%%%%%%%%%%%%%%%%%%%%%%%
%%%%%%%%%%%%%%%%%%%%%%%%%%%%%%%%%%%%%%%%%%%%%%%%%%%%%%%%%%%%%

\begin{abstract}
This paper is devoted to studying the localization of mixing property via Furstenberg families.
It is shown that there exists some
$\F_{\pubd}$-mixing set
in every dynamical system with positive entropy,
and some $\F_{\ps}$-mixing set in every non-PI minimal system.
\end{abstract}

\section{Introduction}
Throughout this paper a \emph{topological dynamical system}
(or \emph{dynamical system}, \emph{system} for short) is a pair $(X, T)$,
where $X$ is a non-empty compact metric space with a metric $d$ and
$T$ is a homeomorphism from $X$ to itself.

Let $(X,T)$ be a dynamical system.
For two subsets $U$ and $V$ of $X$, we define the \emph{hitting time set} of $U$ and $V$ by
\[N(U,V)=\{n\in\N\colon U\cap T^{-n}(V)\neq\emptyset\}.\]
We say that $(X,T)$ is \emph{topologically transitive} (or just \emph{transitive})
if for every two non-empty open subsets $U$ and $V$ of $X$, the set $N(U,V)$ is not empty;
\emph{weakly mixing} if the product system $(X\times X,T\times T)$ is transitive;
\emph{strongly mixing} if for every two non-empty open subsets $U$ and $V$ of $X$, the set $N(U,V)$ is cofinite,
that is there exists $N\in\N$ such that $\{N,N+1,N+2,\dotsc\}\subset N(U,V)$.

It was shown by Furstenberg that if $(X,T)$ is weakly mixing,
then it is weakly mixing of all finite orders, that is for any $n\geq 2$ and any non-empty open subsets $U_1,U_2,\dotsc,U_n$ and $V_1,V_2,\dotsc,V_n$ of $X$,
\[\bigcap_{i=1}^n N(U_i,V_i)\neq\emptyset.\]

In~\cite{XY92}, Xiong and Yang characterized mixing properties by Xiong chaotic sets.
More specifically, a subset $K$ of $X$ is called a \emph{Xiong chaotic set} with respect to
an increasing sequence $\{p_i\}$ of positive integers if for any
subset $E$ of $K$ and for any continuous map $g\colon E \to X$ there is a subsequence $\{q_i\}$ of $\{p_i\}$
such that $\lim_{i\to\infty}T^{q_i}(x)=g(x)$ for every $x\in E$.
They showed that a non-trivial dynamical system $(X,T)$
 is weakly mixing if and only if
there exists some $c$-dense $F_\sigma$-subset $K$ of $X$
that is Xiong chaotic with respect to the sequence $1,2,3,\dotsc$;
 strongly mixing if and only if for any increasing sequence of
positive integers there is a $c$-dense $F_\sigma$-subset $K$ of $X$ that is Xiong chaotic with respect to
this sequence,
where by $c$-dense we mean $K\cap U$ is an uncountable set for any non-empty open subset $U$ of $X$.

Inspired by Xiong chaotic sets, Blanchard and Huang introduced the concept of weakly mixing sets,
which can be regarded as the localization of weak mixing~\cite{BH08}.
A closed subset $A\subset X$ is called a \emph{weakly mixing set}
if there exists a dense Mycielski subset (union of countable many Cantor sets)
$B$ of $A$ such that for any subset $E$ of $B$ and
any continuous map $g\colon E\to A$ there exists a sequence $\{q_i\}$ of positive integers such that
$\lim_{i\to\infty} T^{q_i}(x)=g(x)$ for every $x\in E$.
An alternative definition of weakly mixing sets is using hitting time sets.
Let $A$ be a closed subset of $X$ with at least two point.
Then $A$ is a weakly mixing subset of $X$ if and only if for any
$n\geq 1$ and any open subsets $U_1,U_2,\dotsc,U_n$ and $V_1,V_2,\dotsc,V_n$ of $X$ intersecting $A$,
\[\bigcap_{i=1}^n N(U_i\cap A,V_i)\neq\emptyset.\]
It is shown in~\cite{BH08} that a topological dynamical system with positive entropy contains many weakly mixing sets.
Recently, there is a series of work on the study of weak mixing sets of finite order and relative dynamical properties,
see \cite{LOZ13,OZ11,OZ12,OZ13}.

As the hitting time set $N(U,V)$ is a subset of positive integers,
we can use some class of sets of positive integers to classify transitive systems.
A collection $\F$ of subsets of positive integers is called a \emph{Furstenberg family} (or just \emph{family}),
if it is hereditary upward, i.e., $F_1\subset F_2$ and $F_1\in\F$ imply $F_2\in \F$.
A dynamical system $(X,T)$ is called \emph{$\F$-transitive} if for any two non-empty open subsets
$U$ and $V$ of $X$, $N(U,V)\in\F$.
We say that $(X,T)$ is \emph{$\F$-mixing} if $(X\times X,T\times T)$ is $\F$-transitive.
In~\cite{HSY04}, Huang, Shao and Ye extended Xiong-Yang's result
to $\F$-mixing systems,
that is a non-trivial dynamical system $(X,T)$ is $\F$-mixing
if and only if for every $S\in\kappa\F$ (the dual family of $\F$)
there exists a dense Mycielski subset $K$ of $X$ which is Xiong chaotic with respect to $S$.

In this paper, we are devoted to studying the localization of mixing property via Furstenberg families.
The paper is organized as follows.
In Section 2, we recall some notions and aspects of topological dynamics.
In Section 3, we introduce the concept of $\F$-mixing sets and characterize them by Xiong chaotic sets.
In Section 4, we study dynamical systems with positive  entropy.
We show that if an ergodic invariant measure $\mu$ has positive entropy then
the collection of $\F_{\pubd}$-mixing sets
is residual in the collection of $\mu$-entropy sets,
where $\F_{\pubd}$ is the Furstenberg family of sets with positive upper Banach density.
As a consequence, if a dynamical system has positive entropy then
the collection of $\F_{\pubd}$-mixing sets
is dense in the collection of entropy sets.
In Section 5, we study factor maps between dynamical systems.
We show that if an intrinsic factor map is $\F$-point mixing,
then the collection of $\F$-mixing sets is residual in the fiber space.
We also apply this result to show that a minimal non-PI system contains some
$\F_{\ps}$-mixing set,
where $\F_{\ps}$ is the Furstenberg family of piecewise syndetic sets.
\section{Preliminary}
In this section, we provide some basic notations, definitions
and results which will be used later.
\subsection{Furstenberg family}
For the set of positive integers $\N$,
denote by $\PP=\PP(\N)$ the collection of all subsets of $\N$.
A subset $\F$ of $\PP$ is called a \emph{Furstenberg family} (or just \emph{family}),
if it is hereditary upward, i.e., $F_1\subset F_2$ and $F_1\in\F$ imply $F_2\in \F$.
A family $\F$ is \emph{proper} if it is a non-empty proper subset of $\PP$,
i.e., neither empty nor all of $\PP$.
It is easy to see that a family $\F$ is proper
if and only if $\N\in \F$
and $\emptyset \not\in \F$.
Any non-empty collection $\A$ of
subsets of $\F$ generates a family
$\F(\A)=\{F\subset\N\colon F\supset A $
for some $ A \in\A\}$.

For a family $\F$, the \emph{dual family} is
$\kappa\F=\{F\subset \N\colon F\cap F'\neq \emptyset, \forall F'\in \F\}$.
Clearly, $\kappa\F$ is a family or a proper family if $\F$ is.
It is also not hard to see that $\kappa\kappa\F=\F$.
Let $\F_{\mathrm{inf}}$ be the family of all infinite subsets of $\N$.
It is easy to see that its dual family $\kappa\F_{\mathrm{inf}}$
is the family of all cofinite subsets, denoted by $\F_{\mathrm{cf}}$.

\textbf{Convention}: All the families considered in this paper are assumed to be proper and contained in $\F_{\mathrm{inf}}$.

We say that a subset $F$ of $\N$ is an \emph{IP-set} if there is a subsequence $\{p_i\}$ of $\N$ such that
$\{p_{i_1}+\dotsb+p_{i_k}\colon i_1<\dotsb<i_k, n\in\N\}\subset F$.
The family of IP-sets is denoted by $\F_{\ip}$.

Let $F$ be a subset of $\N$. The \emph{upper Banach density of $F$} is defined by
\[BD^*(F)=\limsup_{|I|\to\infty}\frac{|F\cap I|}{|I|},\]
where $I$ is taken over all non-empty finite intervals of $\N$.
The family of sets with positive upper Banach density is denoted by
$\F_{\pubd}=\{F\subset \N\colon BD^*(F)>0 \}$.

A subset of $F$ of $\N$ is \emph{thick} if it contains arbitrarily long runs of positive integers,
i.e., for every $n\in\N$ there exists some
$k_n\in\N$ such that
$\{k_n, k_n+1,\ldots, k_n+n\}\subset F$;
\emph{syndetic} if there is $N\in\N$
such that $\{i,i+1,\ldots, i+N\}\cap F\neq \emptyset$ for any $i\in\N$;
\emph{piecewise syndetic} if it is the intersection of a thick set and a syndetic set.
The family of syndetic sets, thick sets and piecewise syndetic sets
are denoted by $\F_{\mathrm{s}}$, $\F_{\mathrm{t}}$ and $\F_{\ps}$, respectively.

\subsection{Topological dynamics}
Let $(X,d)$ be a compact metric space. For $x\in X$ and $\varepsilon>0$,
denote $B(x,\varepsilon)=\{y\in X\colon d(x,y)<\varepsilon\}$.
For $n\geq 2$, denote the $n$-th product space $X^{n}=X\times X\times \dotsb \times X$ ($n$-times)
and the diagonal $\Delta_n(X)=\{(x,x,\dotsc,x)\in X^n\colon x\in X\}$.
Let $M(X)$ be the set of regular Borel probability measures on $X$.
The \emph{support} of a measure $\mu\in M(X)$, denoted by $\supp(\mu)$,
is the smallest closed subset $C$ of $X$ such that $\mu(C)=1$.

Let $(X,T)$ be a topological dynamical system.
If $Y$ is a non-empty closed subset of $X$ and $TY\subset Y$, then $(Y,T)$ forms a subsystem of $(X,T)$.
For $n\geq 2$, denote the $n$-th product system by $(X^n,T^{(n)})=(X\times X\times \dotsb\times X,
T\times T\times\dotsb\times T)$ ($n$-times).

The \emph{orbit} of a point $x\in X$, $\{x,T(x),T^2(x),\dotsc\}$, is denoted by $\Orb(x,T)$.
We say that  a point $x\in X$ is a \emph{periodic point} of $(X,T)$ if $T^n(x)=x$ for some $n\in\N$;
a \emph{recurrent point} of $(X,T)$ if there exists an increasing sequence $\{k_n\}$ of positive integers
such that $T^{k_n}(x)\to x$ as $n\to\infty$;
a \emph{transitive point} of $(X,T)$ if $\Orb(x,T)$ is dense in $X$.
Denote by $\Trans(X,T)$ the set of all transitive points of $(X,T)$.
It is well known that if $(X,T)$ is transitive, then the $\Trans(X,T)$ is a dense $G_\delta$ subset of $X$.
A dynamical system $(X,T)$ is \emph{minimal} if $\Trans(X,T)=X$. A point $x\in X$ is \emph{minimal} if
$(\overline{Orb(x,T)},T)$ is a minimal subsystem of $(X,T)$.

Denote by $M(X, T)$ and $M^e(X, T)$ respectively the set of all invariant probability measures and
all ergodic invariant probability measures on $(X, T)$.
A dynamical system $(X,T)$ is called an \emph{E-system} if it is transitive and
there is an invariant probability measure $\mu$ with full support, i.e., $\supp(\mu)=X$;
an \emph{M-system} if it is transitive and the set of minimal points is dense in $X$.

Let $(X,T)$ and $(Y,S)$ be two dynamical systems.
If there is a continuous surjection $\pi\colon X\to Y$ which intertwines the actions (i.e., $\pi\circ T=S\circ \pi$),
then we say that $\pi$ is a \emph{factor map},
 $(Y,S)$ is a \emph{factor} of $(X,T)$ or $(X,T)$ is an \emph{extension} of $(Y,S)$.

\subsection{Dynamical properties via families}
The idea of using families to
describe dynamical properties goes back at least to Gottschalk and Hedlund~\cite{GH55}.
It was developed further by Furstenberg~\cite{F81}.
For a systematic study and recent results, see~\cite{A97}, \cite{G04}, \cite{HY04-2} and \cite{HY04}.

Let $(X,T)$ be a dynamical system.
For a point $x\in X$ and a subset $U$ of $X$, we define the
\emph{entering time set} of $x$ into $U$ by
\[N(x,U)=\{n\in\N\colon T^n(x)\in U\}.\]
Let $\F$ be a Furstenberg family.
A point $x\in X$ is called an \emph{$\F$-recurrent point},
if $N(x, U)\in\F$ for every open neighborhood $U$ of $x$.
It is well known that the following lemmas hold (see, e.g., \cite{A97,F81}).

\begin{lem} Let $(X, T)$ be a dynamical system and $x\in X$. Then
\begin{enumerate}
\item $x\in X$ is a minimal point if and only if it is an $\F_{\mathrm{s}}$-recurrent point.
\item $x \in X$ is a recurrent point if and only if it is an $\F_{\ip}$-recurrent point.
\end{enumerate}
\end{lem}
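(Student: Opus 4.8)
The plan is to treat the two equivalences separately, noting that in each case the substance is a classical recurrence characterization: for (1), that a point is minimal precisely when it is \emph{uniformly recurrent} (returns to every neighborhood along a syndetic set), and for (2), Furstenberg's observation that an ordinary recurrent point already returns along an IP-set. In both items one implication is soft and the other requires a construction, so I would isolate the two directions and handle them with compactness/continuity on one side and an inductive building scheme on the other.

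For (1), the forward direction I would argue by compactness. Put $Y = \overline{\Orb(x,T)}$ and assume $(Y,T)$ is minimal. Given an open neighborhood $U$ of $x$, the set $U \cap Y$ is nonempty and open in $Y$, and minimality gives $Y = \bigcup_{n \geq 0} T^{-n}(U \cap Y)$; by compactness finitely many preimages already cover $Y$, say up to $n = N$. Applying this to each orbit point $T^i(x) \in Y$ yields some $0 \le n \le N$ with $T^{i+n}(x) \in U$, so every block $\{i, i+1, \dots, i+N\}$ meets $N(x,U)$ and $N(x,U) \in \F_{\mathrm{s}}$. For the reverse direction I would argue by contradiction: if $N(x,U) \in \F_{\mathrm{s}}$ for every $U$ but $Y$ is not minimal, pick (via Zorn's lemma) a minimal subset $M \subsetneq Y$; then $x \notin M$, so some neighborhood $U$ of $x$ has $\overline{U} \cap M = \emptyset$. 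Fix the syndetic gap $N$ of $N(x,U)$ and any $w \in M$. Since $T^n(w) \in M$ avoids $\overline{U}$ for $0 \le n \le N$, continuity of $T^0,\dots,T^N$ furnishes a neighborhood $W$ of $w$ with $T^n(W) \cap U = \emptyset$ for all such $n$; but $w \in Y$ forces $T^{m}(x) \in W$ for some large $m$, and then the block $\{m,\dots,m+N\}$ misses $N(x,U)$, contradicting syndeticity.

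For (2), the implication $\F_{\ip}$-recurrent $\Rightarrow$ recurrent is immediate, since IP-sets are infinite and hence every $N(x,U)$ is nonempty. The substantive direction is that recurrence yields an IP-set of return times, which I would obtain by an inductive construction with shrinking neighborhoods. Fixing $U$, I maintain neighborhoods $U = V_0 \supseteq V_1 \supseteq \cdots$ of $x$ and integers $p_1 < p_2 < \cdots$ such that, writing $\mathrm{FS}(p_1,\dots,p_k)$ for the set of finite sums, the invariant $T^s(V_k) \subseteq U$ holds for every $s \in \mathrm{FS}(p_1,\dots,p_k) \cup \{0\}$. Given $V_k$, recurrence supplies $p_{k+1} > p_k$ with $T^{p_{k+1}}(x) \in V_k$, and continuity lets me shrink to $V_{k+1} \subseteq V_k$ with $T^{p_{k+1}}(V_{k+1}) \subseteq V_k$; a sum $s$ over $\{p_1,\dots,p_{k+1}\}$ either avoids $p_{k+1}$, where the old invariant applies directly, or equals $p_{k+1}+s'$, where $T^s(V_{k+1}) \subseteq T^{s'}(V_k) \subseteq U$. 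The resulting $\mathrm{FS}(p_1,p_2,\dots) \subseteq N(x,U)$ is then the desired IP-set, so $N(x,U) \in \F_{\ip}$ for every $U$.

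The main obstacle is this last construction: the delicate point is choosing $V_{k+1}$ so that the return property is preserved simultaneously for \emph{all} of the exponentially many finite sums, and that is exactly what the two-case split (sums containing $p_{k+1}$ versus those that do not) is designed to handle. Everything else in the lemma reduces to compactness of $X$ and continuity of the finitely many iterates $T^0,\dots,T^N$.
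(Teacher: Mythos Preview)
Your proof is correct and follows the standard arguments (compactness for uniform recurrence; Furstenberg's nested-neighborhood construction for IP-recurrence). There is nothing to compare against, however: the paper does not prove this lemma at all, but simply states it as well known with a citation to \cite{A97,F81}. Your write-up is exactly the kind of argument one finds in those references, so in effect you have supplied the omitted proof.
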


Recall that a dynamical system $(X,T)$ is
\emph{$\F$-transitive}
if for two non-empty open subsets $U$ and $V$ of $X$, $N(U,V) \in\F$;
\emph{$\F$-mixing} if $(X\times X, T\times T)$ is $\F$-transitive.

\begin{lem} Let $(X, T)$ be a dynamical system and $\F$ be a family. Then
\begin{enumerate}
\item $(X, T)$ is weakly mixing if and only if it is $\F_{\mathrm{t}}$-transitive.
\item $(X, T)$ is strongly mixing if and only if it is $\F_{\mathrm{cf}}$-transitive.
\item $(X,T)$ is $\F$-mixing if and only if it is $\F$-transitive and weakly mixing.
\end{enumerate}
\end{lem}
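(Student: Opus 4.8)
The plan is to handle the three equivalences separately, since their difficulty is very uneven. Part (2) is purely definitional: by the definition given in the introduction, $(X,T)$ is strongly mixing exactly when $N(U,V)$ is cofinite for every pair of non-empty open sets $U,V$, and $\F_{\mathrm{cf}}$ is by definition the family of cofinite sets, so $\F_{\mathrm{cf}}$-transitivity asserts literally the same condition. Thus (2) needs no argument beyond unwinding notation, and the substance of the lemma lies in (1) and (3).

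For part (1) I would first record two elementary facts. The shift identity $N(U,V)-i=N(U,T^{-i}V)$ holds for every $i\ge 0$, and a set $F\subseteq\N$ is thick if and only if $\bigcap_{i=0}^{L}(F-i)\neq\emptyset$ for every $L$, since a run $\{a,a+1,\dots,a+L\}\subseteq F$ is exactly the statement $a\in\bigcap_{i=0}^{L}(F-i)$. Combining these, $N(U,V)$ is thick if and only if $\bigcap_{i=0}^{L}N(U,T^{-i}V)\neq\emptyset$ for all $L$. The forward implication of (1) is then immediate from Furstenberg's theorem quoted in the introduction: if $(X,T)$ is weakly mixing it is weakly mixing of all finite orders, so applying this to the $L+1$ pairs $(U,T^{-i}V)$, $0\le i\le L$, produces a run of every length inside $N(U,V)$; hence $(X,T)$ is $\F_{\mathrm{t}}$-transitive.

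The converse of (1), that thickness of all hitting sets forces weak mixing, is where the real work sits, and I expect this to be the main obstacle. The naive strategy of intersecting $N(U_1,V_1)$ with $N(U_2,V_2)$ fails at the level of families, because the collection of thick sets is not closed under intersection: two thick sets can be disjoint, so no elementary alignment of a single return time produces a common $n$. Breaking this circularity is precisely the content of the classical topological weak mixing theorem, so here I would either cite it (Furstenberg, Akin) or reproduce Furstenberg's inductive bootstrapping that propagates transitivity through the self-products of $X$.

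For part (3) the argument is short once the alignment idea is available. If $(X\times X,T\times T)$ is $\F$-transitive then, since $\F\subseteq\F_{\mathrm{inf}}$, every hitting set in the product is infinite and hence non-empty, so the product is transitive and $(X,T)$ is weakly mixing; testing with $U\times X$ and $V\times X$ gives $N(U\times X,V\times X)=N(U,V)\in\F$, so $(X,T)$ is also $\F$-transitive. Conversely, assume $(X,T)$ is $\F$-transitive and weakly mixing and fix non-empty open $U_1,U_2,V_1,V_2$; I must show $N(U_1,V_1)\cap N(U_2,V_2)\in\F$. Here weak mixing supplies the common return time that (1) lacked: the product is transitive, so $N(U_2,U_1)\cap N(V_2,V_1)\neq\emptyset$, and choosing $k$ in it I set $W_1=U_2\cap T^{-k}U_1$ and $W_2=V_2\cap T^{-k}V_1$, both non-empty open. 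A direct check gives $N(W_1,W_2)\subseteq N(U_1,V_1)\cap N(U_2,V_2)$: if $x\in W_1$ and $T^n x\in W_2$, then $x\in U_2$ and $T^n x\in V_2$ give $n\in N(U_2,V_2)$, while $T^k x\in U_1$ and $T^{n+k}x\in V_1$ give $n\in N(U_1,V_1)$. Since $N(W_1,W_2)\in\F$ by $\F$-transitivity and $\F$ is hereditary upward, the larger set $N(U_1,V_1)\cap N(U_2,V_2)$ lies in $\F$, which is exactly $\F$-mixing; the upward heredity of $\F$ is what makes this final step work.
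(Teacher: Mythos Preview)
Your proposal is correct. Note, however, that the paper does not actually prove this lemma: it is stated in the preliminaries as a well-known fact, with the surrounding references to Akin~\cite{A97}, Furstenberg~\cite{F81}, and Huang--Ye~\cite{HY04-2,HY04} serving as justification. So there is no ``paper's own proof'' to compare against; you have supplied considerably more than the paper does.

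On the substance: your treatment of (2) and of both directions of (3) is complete and correct. In particular, the alignment trick in (3)---using weak mixing to pick $k\in N(U_2,U_1)\cap N(V_2,V_1)$ and then passing to $W_1=U_2\cap T^{-k}U_1$, $W_2=V_2\cap T^{-k}V_1$---is exactly the standard argument, and your verification that $N(W_1,W_2)\subseteq N(U_1,V_1)\cap N(U_2,V_2)$ is clean. For (1), the forward implication via Furstenberg's all-orders theorem and the shift identity $N(U,T^{-i}V)=N(U,V)-i$ is correct, and you are right that the converse (thick-transitivity forces weak mixing) is where the content lies; citing Furstenberg or Akin here is entirely in the spirit of the paper, which does the same.
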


Let $\F$ be a family. A point $x\in X$ is called an \emph{$\F$-transitive point},
if $N(x, U)\in\F$ for every non-empty open subset $U$ of $X$ \cite{L11}.
The system $(X,T)$ is called \emph{$\F$-point transitive} if there exists some $\F$-transitive point.
It is clear that a dynamical system is transitive if and only if it is
$\F_{\mathrm{inf}}$-point transitive; minimal if and only if it is $\F_\mathrm{s}$-point transitive.
Denote $\Trans_{\F}(X,T)$ by the set of all $\F$-transitive points of $(X,T)$.

\begin{lem} [\cite{HY04-2,L11}] \label{thm:E-M-system}
Let $(X, T)$ be a dynamical system. Then
\begin{enumerate}
\item $(X,T)$ is an E-system if and only if $\Trans_{\F_{\pubd}}(X,T)=\Trans(X,T)\neq\emptyset$;
\item $(X,T)$ is an M-system if and only if $\Trans_{\F_{\ps}}(X,T)=\Trans(X,T)\neq\emptyset$.
\end{enumerate}
\end{lem}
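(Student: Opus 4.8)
The plan is to treat the two parts in parallel, since both assert that a global structural property of a transitive system (an invariant measure of full support; a dense set of minimal points) is exactly the localization, at every transitive point $x$, of a combinatorial property of the return-time sets $N(x,U)$. First I would record the easy half common to both: since every family here is contained in $\F_{\mathrm{inf}}$, an $\F$-transitive point has $N(x,U)$ infinite, hence nonempty, for each nonempty open $U$, so its orbit is dense; thus $\Trans_{\F_{\pubd}}(X,T)\subseteq\Trans(X,T)$ and $\Trans_{\F_{\ps}}(X,T)\subseteq\Trans(X,T)$ automatically. Each equality then reduces to: under the E- (resp. M-) hypothesis every transitive point is $\F_{\pubd}$- (resp. $\F_{\ps}$-) transitive, and conversely $\F$-transitivity of a single transitive point forces the structure. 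The common engine is an elementary shadowing remark: as $T$ is continuous on the compact space $X$, for every $L$ and $\varepsilon>0$ there is $\delta>0$ with $d(a,b)<\delta\Rightarrow d(T^j a,T^j b)<\varepsilon$ for all $0\le j\le L$; and since a transitive $x$ has dense orbit, given any target $y$ one may pick $k$ with $d(T^k x,y)<\delta$, so the block $T^k x,\dots,T^{k+L}x$ $\varepsilon$-shadows $y,\dots,T^L y$. This transfers return-time statistics from auxiliary points to $x$.

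For part (1), forward direction, let $\mu$ be invariant with $\supp\mu=X$ and $x$ transitive. Fixing nonempty open $U$, choose open $V$ with $\overline V\subseteq U$ and $\mu(V)>0$; by the ergodic decomposition some ergodic $\nu$ has $\nu(V)>0$, and a generic point $y$ of $\nu$ satisfies $\liminf_N \frac1N\#\{0\le n<N: T^n y\in V\}\ge\nu(V)=:c>0$ by the portmanteau theorem. Taking $\varepsilon=d(\overline V,X\setminus U)>0$ and shadowing $y$ by $x$ on $[0,L)$, every $j<L$ with $T^j y\in V$ yields $T^{k+j}x\in U$, so $[k,k+L)$ meets $N(x,U)$ in at least $\tfrac{c}{2}L$ points for all large $L$; hence $BD^*(N(x,U))\ge\tfrac{c}{2}>0$. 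For the converse, let $x$ be $\F_{\pubd}$-transitive, fix a countable base $\{U_i\}$, and for each $i$ pick open $V_i$ with $\overline{V_i}\subseteq U_i$. Since $BD^*(N(x,V_i))>0$ there are intervals $I_k$ with $|I_k|\to\infty$ and $\mu_k:=\frac1{|I_k|}\sum_{n\in I_k}\delta_{T^n x}$ satisfying $\mu_k(V_i)\ge c_i>0$; a weak-$*$ limit $\nu_i$ is invariant, and portmanteau for the closed set $\overline{V_i}$ gives $\nu_i(U_i)\ge\nu_i(\overline{V_i})\ge c_i>0$. Then $\mu:=\sum_i 2^{-i}\nu_i$ is invariant with $\mu(U_i)>0$ for all $i$, so $\supp\mu=X$ and $(X,T)$ is an E-system.

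For part (2), forward direction, let minimal points be dense and $x$ be transitive. Fixing nonempty open $U$, choose open $V$ with $\overline V\subseteq U$ and a minimal point $y\in V$; by the characterization of minimal points as $\F_{\mathrm{s}}$-recurrent points, $N(y,V)$ is syndetic with gaps $\le M$. Shadowing $y$ by $x$ on $[0,L)$ as above transports the $M$-syndetic visit times of $y$ to $V$ into visit times of $x$ to $U$, so $N(x,U)$ has gaps $\le M$ on $[k,k+L)$; letting $L\to\infty$ shows $N(x,U)$ has bounded gaps on arbitrarily long intervals, hence is piecewise syndetic. Conversely, let $x$ be $\F_{\ps}$-transitive, fix nonempty open $U$, and choose open $V$ with $\overline V\subseteq U$. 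As $N(x,V)$ is piecewise syndetic there is $N$ so that $N(x,V)$ has gaps $\le N$ on arbitrarily long intervals; setting $W:=\bigcup_{j=0}^N T^{-j}V$, this says precisely that $N(x,W)$ is thick. Thickness yields $z\in\overline{\Orb(x,T)}$ whose entire forward orbit lies in $\overline W=\bigcup_{j=0}^N T^{-j}\overline V$, so any minimal subset $K\subseteq\overline{\Orb(z,T)}$ satisfies $K\subseteq\bigcup_{j=0}^N T^{-j}\overline V$; choosing $w\in K$ with $w\in T^{-j_0}\overline V$ and using $T^{j_0}K\subseteq K$ gives a minimal point $T^{j_0}w\in\overline V\subseteq U$. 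Hence minimal points are dense and $(X,T)$ is an M-system.

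The routine parts are the shadowing transfers and the weak-$*$ limit computations. The two places where the content sits are, first, the realization in part (1) that a transitive point need not be $\mu$-generic, which forces one to import positive density from a genuinely generic auxiliary point $y$ and to fix the shadowing accuracy $\varepsilon=d(\overline V,X\setminus U)$ \emph{before} choosing the window length $L$ (the order of these quantifiers is what lets the density survive the transfer); and second, the reformulation in part (2) of ``$N(x,V)$ piecewise syndetic'' as ``$N(x,W)$ thick'' for the enlarged open set $W=\bigcup_{j=0}^N T^{-j}V$, which converts a bounded-gap statement into a full-interval one that compactness can digest, after which the identity $\overline W=\bigcup_{j=0}^N T^{-j}\overline V$ together with $T^{j_0}K\subseteq K$ pins the minimal point inside $\overline V$. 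I expect this enlargement trick to be the main obstacle and the crux of the whole argument.
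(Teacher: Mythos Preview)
The paper does not give its own proof of this lemma: it is stated with the citation \cite{HY04-2,L11} and used as a black box, so there is nothing in the paper to compare your argument against.

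That said, your proof is correct and complete, and it recovers essentially the arguments found in those references. The forward direction of (1) is handled exactly as one must: pass to an ergodic component $\nu$ with $\nu(V)>0$, pick a $\nu$-generic point $y$, and transfer its positive visit frequency to the transitive point $x$ via uniform continuity on finite windows; your remark about fixing $\varepsilon=d(\overline V,X\setminus U)$ before choosing $L$ is precisely the point that makes the Banach-density transfer go through. The converse of (1) via averaging empirical measures along the witnessing intervals and taking a countable convex combination over a base is the standard construction. In (2), the forward direction is the same shadowing transfer applied to the syndetic return set of a minimal point, and the converse is the well-known reformulation ``$N(x,V)$ piecewise syndetic $\Leftrightarrow$ $N(x,W)$ thick for $W=\bigcup_{j=0}^N T^{-j}V$'' followed by extracting a minimal set inside the closed invariant set $\overline{\Orb(z,T)}\subseteq\overline W$; your observation that $\overline W\subseteq\bigcup_{j=0}^N T^{-j}\overline V$ and the use of $T^{j_0}K\subseteq K$ to land the minimal point in $\overline V\subseteq U$ are exactly right. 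Nothing is missing.
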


\subsection{Hyperspace} \label{sec:hyperspace}
Let $(X,d)$ be a compact metric space.
Let $(2^X, d_H)$ be the \emph{hyperspace} of  $(X,d)$,
i.e., the collection of all non-empty closed subsets of $X$ endowed with the Hausdorff metric $d_H$.
We refer the reader to  \cite{A04} or \cite{IN99} more details on the hyperspace.
Let $(X,T)$ be a dynamical system. It induces naturally
a dynamical system $(2^X, \hat T)$ on the hyperspace,
where $\hat T(A)=T(A)$ for $A\in 2^X$.

Let $\Perf(X)$ denote the collection of all non-empty perfect subsets of $X$.
For every $n\in\N$
we define a subset $L_n(X)$ of $2^X$ as follows: $E\in L_n(X)$
if and only if there exist some $k\in\N$ and non-empty open subsets $\{U_i\}_{i=1}^k$ of $X$ such that
\begin{enumerate}
\item $\bigcup_{i=1}^k U_i\supset E$,
\item $\diam(U_i)<\frac{1}{n}$ and $\#(U_i\cap E)\geq 2$,  for $i=1,2,\ldots,k$.
\end{enumerate}
It is not hard to check that $L_n(X)$ is an open subset of $2^X$ and
$\Perf(X)=\bigcap_{n=1}^\infty L_n(X)$.
Then $\Perf(X)$ is a $G_\delta$ subset of $2^X$.
In addition, if $X$ is perfect, then $\Perf(X)$ is a dense $G_\delta$ subset of $2^X$
(see \cite{A04} or~\cite{BH08}).

Let $R$ be a relation of $n$-tuples on $X$, i.e., $R\subset X^n$.
A subset $K$ of $X$ with at least $n$ points is said to be
\emph{$R$-dependent}, or a \emph{dependent set} of $R$
if for any pairwise distinct $n$ elements $x_1,\ldots, x_n$ of $K$,
we have $(x_1,x_2,\ldots, x_n)\in R$.
Note that $K$ is $R$-dependent if and only if $K^n\setminus \Delta^{(n)}\subset R$,
where $\Delta^{(n)}=\{(x_1,x_2,\dotsc,x_n)\in X^n:\exists1\leq i<j\leq n,\text{ s.t. }x_i=x_j$\}.
An $X^n\setminus R$-dependent set is called an \emph{$R$-independent set} in \cite{My64}.
Suppose $\{R_n\}$ is a sequence of relations of $X$, i.e. $R_n\subset X^n$ for every $n\geq 1$.
A subset $K$ of $X$ is said to be \emph{$\{R_n\}$-dependent}, or a \emph{dependent set of $\{R_n\}$}
if $K$ is $R_n$-dependent for every $n$.
Let $\mathbb{D}(\{R_n\})$ denote the collection of all closed $\{R_n\}$-dependent sets.
We restate here a version of \cite[Proposition 4.3(h)]{A04} which we shall use.

\begin{prop} \label{prop:G-delta}
Let $(X,d)$ be a compact metric space without isolated points.
If $R$ is a $G_\delta$ subset of $X^n$,
then the collection of closed $R$-dependent sets is a $G_\delta$ subset of $2^X$.
\end{prop}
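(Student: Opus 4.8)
The plan is to realize the dependence condition as a countable intersection of sets that are each manifestly open in $2^X$. Since $R$ is $G_\delta$, I would first fix a decomposition $R=\bigcap_{m=1}^\infty G_m$ with every $G_m$ open in $X^n$. The obstacle to a naive argument is the diagonal: the condition ``$K$ is $R$-dependent'', i.e.\ $K^n\setminus\Delta^{(n)}\subset R$, only constrains tuples with pairwise distinct coordinates, so one cannot simply demand that the compact set $K^n$ avoid the closed set $X^n\setminus R$. To excise a shrinking neighborhood of $\Delta^{(n)}$ I would introduce the continuous function $\rho\colon X^n\to[0,\infty)$ given by $\rho(x_1,\dots,x_n)=\min_{1\le i<j\le n}d(x_i,x_j)$, whose zero set is exactly $\Delta^{(n)}$, and for $k,m\in\N$ set $C_{k,m}=(X^n\setminus G_m)\cap\{\rho\ge 1/k\}$. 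Each $C_{k,m}$ is a closed, hence compact, subset of $X^n$ that is disjoint from the diagonal.

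The key step is to establish the identity
\[
\mathbb{D}(R)=\bigcap_{k,m=1}^{\infty}\{K\in 2^X\colon K^n\cap C_{k,m}=\emptyset\},
\]
where $\mathbb{D}(R)$ denotes the collection of closed $R$-dependent sets. I would verify both inclusions directly. If $K$ is $R$-dependent and some tuple lay in $K^n\cap C_{k,m}$, then it would satisfy $\rho\ge 1/k>0$, hence lie in $K^n\setminus\Delta^{(n)}\subset R\subset G_m$, contradicting $C_{k,m}\cap G_m=\emptyset$. Conversely, any tuple in $K^n\setminus\Delta^{(n)}$ has $\rho\ge 1/k$ for some $k$, and if it avoids every $C_{k,m}$ then it lies in every $G_m$, hence in $R$; thus $K$ is $R$-dependent.

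It then remains to show that each set $W_{k,m}=\{K\colon K^n\cap C_{k,m}=\emptyset\}$ is open. Here I would use that the map $K\mapsto K^n$ is continuous from $(2^X,d_H)$ to $(2^{X^n},d_H)$ (in fact $1$-Lipschitz for the max metric on $X^n$), together with the standard fact that $\{A\in 2^{X^n}\colon A\subset U\}$ is open for every open $U$; taking $U=X^n\setminus C_{k,m}$ yields that $W_{k,m}$ is open. Concretely, if $K^n\cap C_{k,m}=\emptyset$ then $\delta:=\operatorname{dist}(K^n,C_{k,m})>0$, and any $K'$ with $d_H(K',K)<\delta$ has $K'^n$ within $\delta$ of $K^n$, hence still disjoint from $C_{k,m}$. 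Consequently $\mathbb{D}(R)$ is a countable intersection of open sets, i.e.\ a $G_\delta$ subset of $2^X$.

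I expect the single genuine subtlety to be the treatment of the diagonal described above; once the conditions $\{\rho\ge 1/k\}$ convert the dependence requirement into a countable conjunction of ``$K^n$ avoids a fixed compact set disjoint from $\Delta^{(n)}$'' conditions, the remainder is routine. I would also note that the hypothesis that $X$ has no isolated points is not actually needed for this $G_\delta$ conclusion (it enters elsewhere, to guarantee that dependent sets are plentiful), so I would either invoke it only for consistency with the cited source or simply omit its use in this argument.
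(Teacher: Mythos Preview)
The paper does not give its own proof of this proposition; it is quoted without proof from \cite[Proposition~4.3(h)]{A04}. Your argument is correct and self-contained: writing $R=\bigcap_m G_m$, slicing the complement of the diagonal by the levels $\{\rho\ge 1/k\}$, and reducing to the open conditions ``$K^n$ misses the compact set $C_{k,m}$'' via continuity of $K\mapsto K^n$ is exactly the standard route (and is essentially how Akin proceeds in the cited reference). Your remark that the hypothesis ``$X$ has no isolated points'' is not used for the $G_\delta$ conclusion is also correct; that hypothesis is only relevant when one wants such dependent sets to be abundant.
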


Recall that a map $\pi\colon X\to Y$ is called \emph{open}
if for every non-empty open subset $U$ of $X$, $\pi(U)$ is open in $Y$.
\begin{thm}[\mbox{\cite[Theorem 1.2]{A04}}] \label{lem:Ulam}
Let $\pi\colon X\to Y$
be an open continuous surjection between compact metric spaces.
If  $E\subset X$ is a dense $G_\delta$ set, then there exists a dense $G_\delta$ subset $Y_0$ of $Y$
such that $E\cap \pi^{-1}(y)$ is a dense $G_\delta$ subset of $\pi^{-1}(y) $ for each $y\in Y_0$
\end{thm}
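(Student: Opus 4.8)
The plan is to reduce the statement to the Baire category theorem applied twice: once in the base $Y$ to locate the good fibers, and once inside each individual fiber to recover a dense $G_\delta$. Write $E=\bigcap_{n\ge1}U_n$ with each $U_n$ open and dense in $X$, and fix a countable base $\{W_k\}_{k\ge1}$ for the topology of $X$. For any $y\in Y$ the fiber $\pi^{-1}(y)$ is a closed, hence compact (thus Baire), subspace of $X$, and each $U_n\cap\pi^{-1}(y)$ is relatively open in it; consequently $E\cap\pi^{-1}(y)=\bigcap_n(U_n\cap\pi^{-1}(y))$ is automatically a $G_\delta$ subset of the fiber. The only thing to arrange is that, for $y$ in a suitably large set, every $U_n\cap\pi^{-1}(y)$ is in fact \emph{dense} in $\pi^{-1}(y)$; once this holds, Baire's theorem inside $\pi^{-1}(y)$ immediately yields that $E\cap\pi^{-1}(y)$ is dense.

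The key step is a density lemma in which openness of $\pi$ is used. Fix $U$ open dense in $X$ and a basic open set $W=W_k$. Since $\pi$ is open, both $\pi(W)$ and $\pi(U\cap W)$ are open in $Y$, and I claim $\pi(U\cap W)$ is dense in $\pi(W)$. Indeed, given a nonempty open $V\subset\pi(W)$, the set $\pi^{-1}(V)\cap W$ is nonempty and open, so density of $U$ produces a point $x\in U\cap\pi^{-1}(V)\cap W$, whence $\pi(x)\in V\cap\pi(U\cap W)$. Therefore the ``bad set'' $D_{n,k}=\pi(W_k)\setminus\pi(U_n\cap W_k)$ is closed in the open set $\pi(W_k)$ and has empty interior there, so it is nowhere dense in $Y$: any nonempty open $V\subset\overline{D_{n,k}}$ would meet $\pi(W_k)$ and force a nonempty open subset of $\pi(W_k)$ inside $D_{n,k}$, a contradiction.

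Finally I assemble the pieces. Put $Y_0=Y\setminus\bigcup_{n,k\ge1}\overline{D_{n,k}}$; as the complement of a countable union of closed nowhere dense sets in the Baire space $Y$, this $Y_0$ is a dense $G_\delta$ subset of $Y$. Fix $y\in Y_0$ and $n\ge1$. For any basic $W_k$ meeting $\pi^{-1}(y)$ we have $y\in\pi(W_k)$, and since $y\notin D_{n,k}$ this forces $y\in\pi(U_n\cap W_k)$, i.e. $U_n\cap W_k\cap\pi^{-1}(y)\neq\emptyset$. As the sets $W_k\cap\pi^{-1}(y)$ form a base for the subspace topology of the fiber, this says exactly that $U_n\cap\pi^{-1}(y)$ is dense in $\pi^{-1}(y)$. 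Applying Baire's theorem in the compact metric space $\pi^{-1}(y)$ to the sequence of dense open sets $U_n\cap\pi^{-1}(y)$ shows that $E\cap\pi^{-1}(y)$ is a dense $G_\delta$ subset of $\pi^{-1}(y)$, as required.

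The crux, and the only place where openness of $\pi$ enters, is the density lemma showing each $D_{n,k}$ to be nowhere dense; the delicate points there are passing from nowhere density inside the open set $\pi(W_k)$ to nowhere density in all of $Y$, and ensuring that the countably many fiberwise constraints (over all $n$ and $k$) can be imposed simultaneously on a single comeager set $Y_0$. The remaining arguments are routine Baire-category bookkeeping.
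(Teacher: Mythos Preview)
The paper does not actually prove this statement: it is quoted verbatim from Akin's survey \cite{A04} (as indicated by the bracketed attribution in the theorem heading) and is used as a black box in the proof of Theorem~\ref{thm:F-point-trans-extension}. So there is no ``paper's own proof'' to compare against.

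That said, your argument is correct and is the standard Kuratowski--Ulam style proof for open maps. The only place worth a second look is the passage from ``$D_{n,k}$ has empty interior in the open set $\pi(W_k)$'' to ``$D_{n,k}$ is nowhere dense in $Y$''; your justification is fine once one notes that $\overline{D_{n,k}}\cap\pi(W_k)=D_{n,k}$ (because $D_{n,k}$ is relatively closed in $\pi(W_k)$) and that any nonempty open $V\subset\overline{D_{n,k}}\subset\overline{\pi(W_k)}$ must meet the open set $\pi(W_k)$. Everything else is routine Baire bookkeeping, exactly as you say.
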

%%%%%%%%%%%%%%%%%%%%%%%%%%%%%%%%%%%%%%%%%%%%%%

\section{\texorpdfstring{$\F$}{F}-mixing sets}
Xiong and Yang \cite{XY92} characterized weak mixing by Xiong chaotic sets.
Huang, Shao and Ye \cite{HSY04} extended to Xiong-Yang¡¯s result to $\F$-mixing.
Inspired by Xiong chaotic sets, Blanchard and Huang~\cite{BH08} introduced
the localizaiton of weak mixing, weakly mixing sets,
and  characterized them by Xiong chaotic sets.
In this section,
we introduce the concept of $\F$-mixing sets and also characterize them by Xiong chaotic sets.
\begin{defn}
Let $(X,T)$ be a dynamical system and $\F$ be a Furstenberg family.
Suppose that $A$ is a closed subset of
$X$ with at least two points.
The set $A$ is said to be \emph{$\F$-mixing} if for any $k\in\N$,
any open subsets $U_1,U_2,\dots,U_k$, $V_1,V_2,\dotsc,V_k$ of $X$ intersecting $A$,
\[\bigcap_{i=1}^k N(U_i\cap A,V_i)\in \F.\]
\end{defn}

\begin{rem}
By the definition, it is not hard to see that an $\F$-mixing set must be perfect.
\end{rem}

We have the following characterization of $\F$-mixing sets.
The proof is in main part the same as the proof of  Theorem 5.2 and the Appendix of \cite{HSY04}
in the case of $\F$-mixing systems and the proof of Proposition 4.2 of \cite{BH08} in the case of weak mixing sets.
For the sake of completeness, we provide a proof here.

\begin{thm}\label{thm:F-Ming-set}
Let $(X,T)$ be a dynamical system and $\F$ be a Furstenberg family.
Suppose that $A$ is a closed subset of
$X$ with at least two points.
Then $A$ is an $\F$-mixing set if and only if for every $S\in\kappa\F$ (the dual family of $\F$)
there are Cantor subsets $C_1\subset C_2\subset \dotsc$ of $A$ such that
\begin{enumerate}
\item[(i)] $K=\bigcup_{n=1}^\infty C_n$ is dense in $A$;
\item[(ii)] for any $n\in\N$ and any continuous function $g\colon C_n\to A$
there exists a subsequence $\{q_i\}$ of $S$ such that
$\lim_{i\to\infty} T^{q_i}(x)=g(x)$ uniformly on $x\in C_n$;
\item[(iii)] for any subset $E$ of $K$ and
any continuous map $g \colon E\to A$ there exists a subsequence $\{q_i\}$ of $S$ such that
$\lim_{i\to\infty} T^{q_i}(x)=g(x)$ for every $x\in E$.
\end{enumerate}
\end{thm}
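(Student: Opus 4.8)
The plan is to prove both directions of this characterization, with the forward direction (from $\F$-mixing set to the Cantor sets) being the substantial one.

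For the easy direction first: assuming the Cantor sets $C_1 \subset C_2 \subset \cdots$ exist with properties (i)--(iii), I would show $A$ is $\F$-mixing. Given $k \in \N$ and open sets $U_1,\dots,U_k,V_1,\dots,V_k$ meeting $A$, fix an arbitrary $S \in \kappa\F$; it suffices to produce a point of $N\bigl(\bigcap_i N(U_i \cap A, V_i)\bigr) \cap S$, since then $\bigcap_i N(U_i \cap A, V_i)$ meets every member of $\kappa\F$ and hence lies in $\kappa\kappa\F = \F$. To do this I pick distinct points $x_1,\dots,x_k \in K$ with $x_i \in U_i \cap A$ (possible by density of $K$, after shrinking) and target points $y_i \in V_i \cap A$, define a continuous $g$ on the finite set $E = \{x_1,\dots,x_k\}$ by $g(x_i) = y_i$, and apply (iii) to get $q_i \in S$ with $T^{q_i}(x_j) \to y_j$; for large $i$ the single index $q_i$ witnesses $q_i \in \bigcap_j N(U_j \cap A, V_j)$.

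The hard direction is the construction. I would fix $S \in \kappa\F$ and build the Cantor sets by an inductive Mycielski-type scheme. The engine is the $\F$-mixing hypothesis itself: for finitely many open sets meeting $A$, the hitting-time intersection lies in $\F$, so it meets $S$ (since $S \in \kappa\F$), giving a \emph{common} return time drawn from $S$. The construction proceeds stage by stage over a sequence of finer and finer finite open covers of $A$ by balls of shrinking diameter; at each stage I refine a finite collection of ``cells'' into pairs of subcells and use the common hitting time from $S$ to arrange that each cell can be mapped close to any prescribed target cell. A standard enumeration bookkeeping ensures that every finitely-supported continuous ``demand'' (specifying where finitely many points should go) is eventually met by some $q_i \in S$, yielding (ii) uniformly on each $C_n$; the passage to arbitrary subsets $E \subset K$ and the pointwise conclusion (iii) then follows by a diagonal argument over the nested $C_n$ together with the uniform control on each level.

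The main obstacle, as flagged in the text, is organizing this recursion so that \emph{all} three properties hold simultaneously for the \emph{same} sequence of Cantor sets and so that the hitting times are always chosen from the prescribed $S \in \kappa\F$ rather than from $\N$ at large; this is precisely where the duality $S \in \kappa\F \Leftrightarrow S \cap F \neq \emptyset$ for all $F \in \F$ is used at each inductive step. Since the bookkeeping is entirely parallel to that of \cite[Theorem 5.2]{HSY04} and \cite[Proposition 4.2]{BH08}, I would set up the inductive hypotheses carefully and then indicate the routine combinatorial steps rather than recomputing every estimate; the only genuinely new ingredient relative to those references is threading the family constraint through, so that is where I would concentrate the detail.
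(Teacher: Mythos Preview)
Your proposal is correct and follows essentially the same approach as the paper: the sufficiency argument via (iii) and duality is identical, and for necessity the paper carries out exactly the inductive Cantor-scheme construction you describe (refining cells into pairs, using $\F$-mixing plus $S\in\kappa\F$ to select common hitting times $m(\beta)\in S$ for every finite ``coloring'' $\beta$, with bookkeeping parallel to \cite{HSY04,BH08}). One small point: in the paper property (iii) is not deduced from (ii) by a diagonal argument but is verified directly from the same inductive data by choosing, for each level $n$, a single $q_n=m(\beta_n)\in S$ adapted to the continuity of $g$; this distinction is minor and your outline would also go through.
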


\begin{proof}
We first prove the sufficiency.
Fix $S\in \kappa\F$.
Choose $K=\bigcup_{n=1}^\infty C_n$ satisfying requirements with respect to $x$ and $S$.
Fix $k\in \mathbb{N}$ and non-empty open subsets  $U_1,\ldots, U_k$ and $V_1, \ldots, V_k$ of $X$
intersecting $A$.
Choose $x_i\in K\cap U_i$ and $y_i\in A\cap V_i$ for $1\leq i\leq k$.
Since $K$  has no isolated points, we can require $x_s\neq x_t$ for $1\leq s<t\leq k$.
Put $E=\{x_1,x_2,\ldots, x_k\}$ and take $g\colon E\to A$ with $g(x_i)=y_i, i=1,2,\ldots,k$.
Then there exists $\{n_j\}\subset S$ such that $T^{n_j}(x)\to g(x)$ for every $x\in E$.
As $E$ is finite and $y_i\in V_i$, there exists some $n_{j_0}\in S$ such that
$T^{n_{j_0}}(x_i)\in V_i$ for $i=1,\ldots,k$.
Then
\[\bigcap_{i=1}^k N(U_i\cap A,V_i)\cap S\neq\emptyset,\]
which implies that $\bigcap_{i=1}^k N(U_i\cap A,V_i)\in\kappa\kappa\F=\F$
since $S\in\kappa\F$ is arbitrary.

Now we prove the necessity.
Fix $S\in \kappa\F$.
Choose a sequence $\{O_j\}$ of non-empty open subsets of $X$ with $\diam(O_j)\to 0$ such that
$\{O_j\cap A\}$ is a countable topological base of $A$.
Let $Y=\{y_1,y_2,\ldots\}$ be a dense countable subset of $A$ and $Y_n=\{y_1,y_2,\ldots,y_n\}$.
Set $a_0=0$ and $U_0=X$.

\begin{claim}
There is an increasing sequence $\{a_n\}_{n=1}^\infty$ of positive integers and non-empty open subsets
$\{U_{n,1}, U_{n,2},\ldots, U_{n,a_n}\}_{n=1}^{\infty}$ of $X$ intersecting $A$  such that
\begin{enumerate}
\item[(1)] $2a_{n-1}\leq a_n\leq 2a_{n-1}+n$;
\item[(2)] $\diam (U_{n,i})<\frac{1}{n}$ for $i=1,2,\ldots,a_n$;
\item[(3)] the closure $\left\{\overline{U_{n,i}}\right\}_{i=1}^{a_n}$ are pairwise disjoint;
\item[(4)] $\overline{U_{n,2i-1}}\bigcup\overline{U_{n,2i}}\subset U_{n-1,i}$ for $i=1,2,\ldots, a_{n-1}$;
\item[(5)] $Y_n\subset B(\bigcup_{i=1}^{a_n}U_{n,i}, \frac{1}{n})$;
\item[(6)] for any $\beta\in\{1,2,\ldots, a_n\}^{a_n}$ there exists $m(\beta)\in S$ such that
\[T^{m(\beta)}(U_{n,i})\subset O_{\beta(i)} \text{ for }i=1,2,\ldots,a_n.\]
\end{enumerate}
\end{claim}
\begin{proof}[Proof of Claim 1]
Let $a_1=1$ and $U_{1,1}^{(0)}$ be a neighborhood of $y_1$ with $\diam(U_{1,1}^{(0)})<1$.
Then $U_{1,1}^{(0)}\cap A\neq\emptyset$ and $N(U_{1,1}^{(0)}\cap A,O_1)\in\F$.
Choose $m(1)\in S\cap N(U_{1,1}^{(0)}\cap A,O_1)$.
Then there exists a non-empty open subset $U_{1,1}$ of $U_{1,1}^{(0)}$
intersecting $A$ such that $T^{m(1)}(U_{1,1})\subset O_1$.

Assume that for $1\leq j\leq n-1$, we have
$\{a_j\}_{j=1}^{n-1}$ and $\{U_{j,1}, U_{j,2},\ldots, U_{j,a_j}\}_{j=1}^{n-1}$ satisfying conditions (1)--(6).
Since $A$ has no isolated points and the number of elements of $Y_n$ is $n$,
we can take $2a_{n-1}\leq a_n \leq 2a_{n-1}+n$ and non-empty open subsets
$U_{n,1}^{(0)},U_{n,2}^{(0)},\ldots, U_{n,a_n}^{(0)}$ of $X$ intersecting $A$ such that
\begin{enumerate}
\item[(7)] $\diam(U_{n,i}^{(0)})\leq \frac{1}{2n}$ for $i=1,2,\ldots, a_n$;
\item[(8)] the closure $\Bigl\{\overline{U_{n,i}^{(0)}}\Bigr\}_{i=1}^{a_n}$ are pairwise disjoint;
\item[(9)] $\overline{U_{n,2i-1}^{(0)}}\cup\overline{U_{n,2i}^{(0)}}\subset U_{n-1,i}$ for $i=1,2,\ldots, a_{n-1}$;
\item[(10)] $Y_n\subset B(\bigcup_{i=1}^{a_n}U_{n,i}^{(0)}, \frac{1}{2n})$.
\end{enumerate}

We arrange $\{1,2,\ldots,a_n\}^{a_n}$ as $\{\beta_i\}_{i=1}^{t_n}$, where $t_n=(a_n)^{a_n}$.
Since $A$ is $\F$-mixing,
\[F_1:=\bigcap_{i=1}^{a_n} N(U_{n,i}^{(0)}\cap A,O_{\beta_1(i)})\in\F.\]
Choose $m(\beta_1)\in S\cap F_1$ and then there exist non-empty open subsets
$U_{n,i}^{(1)}$ of $U_{n,i}^{(0)}$ intersecting $A$
such that
\[T^{m(\beta_1)}(U_{n,i}^{(1)})\subset O_{\beta_1(i)}, \text{ for }1\leq i\leq a_n.\]
Assume that for $1\leq j\leq t_n-1$ one has $m(\beta_1), m(\beta_2),\ldots,m(\beta_j)\in S$
and non-empty open subsets $U_{n,i}^{(0)}\supset U_{n,i}^{(1)}\supset U_{n,i}^{(2)}\supset\cdots\supset U_{n,i}^{(j)}$
intersecting $A$ such that
\[T^{m(\beta_h)}(U_{n,i}^{(h)})\subset O_{\beta_h(i)}, \text{ for  }1\leq i\leq a_n\text{ and }1\leq h\leq j.\]
Choose
\[m(\beta_{j+1}) \in S\cap \bigcap_{i=1}^{a_n} N(U_{n,i}^{(j)}\cap A,O_{\beta_{j+1}(i)}).\]
Then there exist non-empty open subsets $U_{n,i}^{(j+1)}$ of $U_{n,i}^{(j)}$ intersecting $A$ such that \[T^{m(\beta_{j+1})}(U_{n,i}^{(j+1)})\subset O_{\beta_{j+1}(i)}\text{ for }1\leq i\leq a_n.\]
By induction we have $\{m(\beta_j)\}_{j=1}^{t_n}$ and $\{U_{n,i}^{(j)}\}_{i=1}^{a_n}$, $j=0,1,\dotsc,t_n$.
Let $U_{n,i}=U_{n,i}^{(t_n)}$ for $1\leq i\leq a_n$.
Then $\{U_{n,i}\}_{j=1}^{a_n}$ satisfies conditions (1)--(6).
This ends the proof of the Claim 1.
\end{proof}

For $n\geq 1$, put $C_n=\bigcap_{j=n}^{\infty} \bigcup_{i=1}^{ (2^{j-n}a_n)}\overline{U_{j,i}}$,
where $\{a_n\}$ and $\{U_{j,i}\}$ are as in the Claim 1.
By (1)--(4), $C_n$ is a Cantor set.
Let $K=\bigcup_{n=1}^{\infty} C_n$. By (5), $K$ is dense in $A$.

\begin{claim}
For every $n\in\N$, $C_n$ satisfies the requirement of (ii) with respect to $S$.
\end{claim}
\begin{proof}[Proof of Claim 2]
Fix a continuous map $g\colon C_n\to A$ and $\varepsilon>0$.
Since $C_n$ is a compact set, there exists $m\geq n$ such that if
$x,y\in C_n$ and $d(x,y)\leq \frac{1}{m}$ then $d(g(x),g(y))< \varepsilon/2$.
For every $i\in\{1,2,\ldots, 2^{m-n}a_n\}$, we choose $x_i\in \overline{U_{m,i}}\cap C_n$.
Note that when $z_i\in \overline{U_{m,i}}$, one has $d(z_i,x_i)\leq 1/m$.
By the choice of $m$, one has $d(g(z_i),g(x_i))< \varepsilon/2$ when $z_i\in \overline{U_{m,i}}\cap C_n$.

By the construction of $\{O_j\}$, for every $i\in\{1,2,\ldots, 2^{m-n}a_n\}$
there exists some $n_i\in\N$ such that $g(x_i)\in O_{n_i}$ and $\diam(O_{n_i})<\varepsilon/2$.
Let $N=\max\{m,n_1,n_2,\ldots, \allowbreak n_{2^{m-n}a_n}\}$, $i(j)=[(j-1)/2^{N-m}]+1$ and
$\beta\in\{1,2,\ldots,a_N\}^{a_N}$ be any sequence with  $\beta(j)=n_{i(j)}$ for $j\in\{1,2,\ldots, 2^{N-n}a_n\}$.
By (6) there exists $k=m(\beta)\in S$ such that $T^k U_{N,j}\subset O_{\beta(j)}$
for every $j\in\{1,2,\ldots, 2^{N-n}a_n\}$.

For each $x\in C_n$, there exists  some $j\in\{1,2,\ldots, 2^{N-n}a_n\}$ such that $x\in \overline{U_{N,j}}$.
By the definition of $i(j)$, one has  $x\in \overline{U_{m,i(j)}}$, then $d(g(x),g(x_{i(j)}))<\varepsilon/2$.
Since $T^kx\in \overline{T^k(U_{N,j})}\subset \overline{O_{\beta(j)}}$, one has
$d(T^k(x), g(x_{i(j)}))\leq \diam ( \overline{O_{\beta(j)}} )<\varepsilon/2$. Thus,
\[d(g(x), T^kx)\leq d(g(x),g(x_{i(j)}))+d(T^kx, g(x_{i(j)}))<\varepsilon.\]
This ends the proof of Claim 2.
\end{proof}

\begin{claim}
$C$ satisfies the requirement of (iii) with respect to $S$.
\end{claim}
\begin{proof}[Proof of Claim 3]
Fix a subset $E$ of $K$ and a continuous map $g\colon E\to A$.
For any $n\geq 1$, let
\begin{align*}
  E_n=\bigl\{x\in E\colon  \exists i_x\in\{1,2,\ldots,n\}, a_x\in\{1,2,\ldots,a_n\}
   \text{ s.t. } x\in U_{n,a_x}\cap E\subset g^{-1}(O_{i_x})\bigr\}.
\end{align*}
It should be noticed that $E_n$ may be empty when $n$ is small.
Obviously $E_1\subset E_2\subset \cdots \subset E$ and $\bigcup_{n=1}^{\infty}E_n=E$.

When $E_n\neq\emptyset $, we rewrite the family of non-empty open sets
\[\left\{ U_{n,j}\colon \exists x\in E_n, i_x\in\{1,2,\ldots,n\}
\text{ s.t. } x\in U_{n,j}\cap E\subset g^{-1}(O_{i_x})\right\}\]
as
\[\left\{ U_{n,i_{n,1}}, U_{n,i_{n,2}}, \ldots  U_{n,i_{n,b_n}} \right\},\]
where $1\leq i_{n,1}<i_{n,2}<\dotsb<i_{n,b_n}\leq a_n$.

For each $n\in\mathbb N$ , let $\beta_n\in\{1,2,\ldots,a_n\}^{a_n}$ be any sequence with \[\beta_n(i_{n,j})=\max\left\{k\in\{1,2,\ldots,a_n\}\colon
U_{n,i_{n,j}}\cap E\subset g^{-1}(O_k)\right\}\text{ for } 1\leq j\leq b_n.\]
Let $q_n=m(\beta_n)\in S$ be as in the Claim 1.

For every $\varepsilon>0$, there exists some $N\in\mathbb N$
such that $\diam(O_n)<\varepsilon$ for every $n\geq N$.
Fix $x\in C$ and choose $t> N$ such that $O_t$ is a neighborhood of $g(x)$.
As $g$ is continuous, $g^{-1}(O_t)$ is an open neighborhood of $x$ in $E$.
Thus there are $n_t\geq t$ and $i_x\in\{1,2,\ldots,a_{n_t}\}$
such that $x\in U_{n_t, i_x}\cap E\subset g^{-1}(O_t)$ by (2).
By (4) and the definition of $K$, for each $j\in\mathbb N$ there is some $i_{x,j}\in \{1,2,\ldots, a_{n_t+j}\}$ such that
\[x\in U_{n_t+j,i_{x,j}}\cap E\subset U_{n_t,i_x}\cap E\subset g^{-1}(O_t).\]
Thus, $\beta_{n_t+j}(i_{x,j})\geq t$ for any $j\in\mathbb N$.
Moreover, by (6) and the definition of $\{q_n\}$ one has for $j\in\N$,
\[T^{q_{n_t+j}}(U_{n_t+j,i_{x,j}})= T^{m(\beta_{n_t+j})}(U_{n_t+j,i_{x,j}})\subset O_{\beta_{n_t+j}(i_{x,j})}\]
and
\[x\in U_{n_t+j,i_{x,j}}\cap E \subset g^{-1}(O_{\beta_{n_t+j}(i_{x,j})}).\]
So $T^{q_{n_t+j}}(x)\in O_{\beta_{n_t+j}(i_{x,j})}$ and $g(x)\in O_{\beta_{n_t+j}(i_{x,j})}$.
For each $j\in\mathbb N$, one has  $d(T^{q_{n_t+j}}(x), g(x))\leq \diam(O_{\beta_{n_t+j}(i_{x,j})})<\varepsilon$, since $\beta_{n_t+j}(i_{x,j})\geq t$. This implies that $\lim_{i\to\infty}T^{q_i}(x)=g(x)$.
\end{proof}
Hence the whole proof is finished.
\end{proof}

\begin{rem} \label{rem:wm-cantor}
By Theorem 3.1 in \cite{E70}, if $D$ is a closed subset of a Cantor set $C$
then any continuous map  $g\colon D\to X$ can be extended to a continuous map $\tilde{g}\colon C\to X$.
Then the condition (ii) in Theorem~\ref{thm:F-Ming-set} can be replaced by
\begin{enumerate}
  \item[(ii)$^\prime$] for any $n\in\N$, any closed subset $D$ of $C_n$ and any continuous function $g\colon D\to A$
there exists a subsequence $\{q_i\}$ of $S$ such that
$\lim_{i\to\infty} T^{q_i}(x)=g(x)$ uniformly on $x\in D$.
\end{enumerate}
\end{rem}

\begin{rem}
It should be noticed that Oprocha used the Kuratowski-Mycielski Theorem (see \cite[Theorem 5.10]{A04}) to
prove Theorem~\ref{thm:F-Ming-set} with Remark \ref{rem:wm-cantor} in case of weakly mixing sets
(see \cite[Corollary 48]{O11}).
\end{rem}

%%%%%%%%%%%%%%%%%%%%%%%%%%%%%%%%%%%%%%%%%%%5
\section{Dynamical systems with positive topological entropy}
In this section, we study dynamical systems with positive topological  entropy.
We refer the reader to the textbook \cite{W82} for the theory of entropy.

Let $(X, T)$ be a dynamical system. For an open cover $\mathcal{U}$ of $X$, denote the topological entropy of
$\mathcal{U}$  by $\htop(T,\mathcal{U})$. The \emph{topological entropy} of $(X,T)$ is
\[ \htop(T) = \sup \htop(T , \mathcal{U}), \]
where the supremum is taken over all open covers $\mathcal{U}$.

Let $\mu$ be an invariant measure on $(X,T)$.
For a finite measurable partition $\alpha$ of $X$, denote the $\mu$-entropy of $\alpha$ by $h_\mu(T, \alpha)$.
The entropy of $\mu$ is
\[h_\mu(T ) = \sup h_\mu(T , \alpha), \]
where $\alpha$ ranges over all finite measurable partitions of $(X,\B_X)$.

The variational principle tells us that
\[\htop(T) = \sup_{\mu\in M^e (X,T )}h_\mu(T). \]

A measure-theoretical dynamical system is a Kolmogorov system
if and only if it has uniformly positive entropy (i.e., each finite non-trivial partition has positive entropy)
if and only if it has completely positive entropy (i.e., each of its non-trivial
factors has positive entropy) if and only if it is disjoint from every zero-entropy
system.
The notion of Kolmogorov system plays an important role in ergodic theory.
To get a topological analogy, Blanchard \cite{B92}
introduced the notions of complete positive entropy and uniformly positive entropy in topological dynamics.
He then naturally defined the notion of entropy pairs and used it to show that a uniformly positive entropy
system is disjoint from all minimal zero entropy systems \cite{B93}.
Later on, in \cite{BHMMR95} the authors were able to define entropy pairs for
an invariant measure and showed that for each invariant measure the set of entropy pairs for this
measure is contained in the set of entropy pairs. To obtain a better understanding of
the topological version of a Kolmogorov system, Huang and Ye \cite{HY06} introduced the notions of
entropy $n$-tuples ($n\geq 2$) both in topological and measure-theoretical settings.
See \cite{GY09} for a survey on the local entropy theory.

Let $K\subset X$ be a non-empty set and $\mathcal{U}$ be an open cover or a partition of $X$.
We say that $\mathcal{U}$ is \emph{admissible} with respect to $K$
if $\overline{U}\not\supset K$ for any $U\in \mathcal{U}$.

\begin{defn}[\cite{HY06}]
Let $(X,T)$ be a dynamical system, $\mu\in M(X,T)$ and $n\geq 2$.
An $n$-tuple $(x_1,x_2,\ldots,x_n)\in X^n\setminus\Delta_n(X)$ is called
\begin{enumerate}
\item a \emph{topological entropy $n$-tuple} if for any open cover $\mathcal{U}$ of $X$,
admissible with respect to $\{x_1,\ldots, x_n\}$, one has $\htop(T,\mathcal{U})>0$.
\item a \emph{$\mu$-entropy $n$-tuple} if for any measurable partition $\alpha$ of $X$,
admissible with respect to $\{x_1,\ldots, x_n\}$, one has $h_{\mu}(T,\alpha)>0$.
\end{enumerate}
\end{defn}

Denote by $E_n(X,T )$ the set of all topological entropy $n$-tuples and
by $E^\mu_n(X,T)$ the set of all entropy $n$-tuples for $\mu$.

Let $(X,T)$ be a dynamical system and $\mu\in M(X,T)$.
Let $\B_\mu$ be the completion of $\B_X$ under the measure $\mu$.
Then $(X,\B_\mu,\mu,T)$ is a Lebesgue system.
Let $\mathscr{P}_\mu$ be the Pinsker $\sigma$-algebra of $(X,\B_\mu,\mu,T)$ and
$\pi\colon(X,\B_\mu,\mu,T)\to (Y,\mathscr{D},\nu,S)$ be the measure-theoretical Pinsker factor of
$(X,\B_\mu,\mu,T)$.
The measure $\mu$ can be disintegrated over $(Y,\mathscr{D},\nu,S)$ as $\mu=\int_Y \mu_y d\nu(y)$,
where $\mu_y\in M(X,T)$ and $\mu_y(\pi^{-1}(y))=1$ for $\nu$-a.e. $y\in Y$ (see \cite[Theorem 5.8]{F81}).

For every $n\geq 1$, define
\[\lambda_n^\mu=\int_Y\mu_y^{(n)}d\nu(y),\]
where $\mu_y^{(n)}=\mu_y\times\cdots\times\mu_y$ ($n$-times).
Then $\lambda_n^\mu$ is an invariant Borel probability measure on $(X^n, T^{(n)})$.
There is a simple characterization of the set of $\mu$-entropy tuples.
\begin{prop}[\cite{G97,HY06}] \label{prop:supp-lambda}
Let $(X,T)$ be a dynamical system, $\mu\in M(X,T)$ and $n\geq 2$. Then
\[\supp(\lambda_n^\mu)\backslash\Delta_n(X)=E_n^\mu(X,T).\]
In addition, if $\mu$ is ergodic, then $\lambda_n^\mu$ is also ergodic
and $(\supp(\lambda_n^\mu),T^{(n)})$ is an E-system.
\end{prop}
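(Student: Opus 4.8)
The plan is to treat the two assertions separately, since the equality $\supp(\lambda_n^\mu)\setminus\Delta_n(X)=E_n^\mu(X,T)$ is a local statement about where the measure concentrates, whereas the ergodicity of $\lambda_n^\mu$ is a global structural fact about the Pinsker extension. For the equality the starting point is the standard characterization of zero-entropy partitions: a finite measurable partition $\alpha$ satisfies $h_\mu(T,\alpha)=0$ if and only if $\alpha$ is measurable with respect to the Pinsker $\sigma$-algebra $\mathscr{P}_\mu$ modulo $\mu$. Hence an off-diagonal tuple $(x_1,\dots,x_n)$ fails to be a $\mu$-entropy tuple exactly when some $\mathscr{P}_\mu$-measurable partition is admissible with respect to $\{x_1,\dots,x_n\}$, i.e.\ separates the tuple so that no atom's closure contains all the $x_i$. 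I would then express both conditions through the conditional measures $\mu_y$: a point lies off $\supp(\lambda_n^\mu)$ if and only if there are open neighborhoods $U_i\ni x_i$ with $\int_Y\prod_{i=1}^n\mu_y(U_i)\,d\nu(y)=0$, equivalently for $\nu$-a.e.\ $y$ at least one factor $\mu_y(U_i)$ vanishes.

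The two inclusions then run as follows. For $\supp(\lambda_n^\mu)\setminus\Delta_n(X)\subseteq E_n^\mu(X,T)$ I argue contrapositively: from a $\mathscr{P}_\mu$-measurable admissible partition witnessing that the tuple is not an entropy tuple, whose atoms pull back from $Y$, I use the concentration of $\mu_y$ on the fiber $\pi^{-1}(y)$ to produce neighborhoods $U_i$ whose fiberwise product measure vanishes $\nu$-a.e., placing the tuple outside $\supp(\lambda_n^\mu)$. For the reverse inclusion I start from neighborhoods $U_i$ with $\lambda_n^\mu(\prod_i U_i)=0$ and use the sets $\{y:\mu_y(U_i)>0\}$ to build a $\mathscr{P}_\mu$-measurable partition that is admissible with respect to the tuple, hence of zero entropy, so the tuple is not a $\mu$-entropy tuple. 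I expect this back-and-forth — converting a fiberwise vanishing condition into an honest admissible $\mathscr{P}_\mu$-measurable partition and conversely — to be the main technical obstacle, since it is where the fiber geometry and the closure (admissibility) condition interact and where one must keep the separating sets measurable with respect to $\mathscr{P}_\mu$.

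For the second assertion, assume $\mu$ is ergodic, so that its Pinsker factor $(Y,\mathscr{D},\nu,S)$ is ergodic as well. The key input is the classical fact that an ergodic system is a \emph{relatively weakly mixing} extension of its Pinsker factor (the extension is relatively K, hence relatively weakly mixing). For such an extension over an ergodic base, each fibered self-product $\int_Y\mu_y^{(n)}\,d\nu(y)$ is ergodic for $T^{(n)}$; applying this yields the ergodicity of $\lambda_n^\mu$. Finally, an ergodic measure makes the system on its support transitive: by the pointwise ergodic theorem $\lambda_n^\mu$-a.e.\ point is generic and therefore has orbit closure equal to $\supp(\lambda_n^\mu)$, while $\lambda_n^\mu$ is by construction an invariant measure of full support on $\supp(\lambda_n^\mu)$. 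Thus $(\supp(\lambda_n^\mu),T^{(n)})$ is transitive and carries a fully supported invariant measure, i.e.\ it is an E-system. The only nonelementary ingredient here is relative weak mixing over the Pinsker factor, which I would quote from the structure theory of ergodic systems rather than reprove.
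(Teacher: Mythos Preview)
The paper does not supply a proof of this proposition: it is quoted verbatim from the cited references \cite{G97,HY06} (Glasner for $n=2$, Huang--Ye for general $n$), so there is no ``paper's own proof'' to compare against. Your outline is essentially the argument given in those sources: the characterization $h_\mu(T,\alpha)=0\Leftrightarrow\alpha\subset\mathscr{P}_\mu$ converts the entropy-tuple condition into a statement about $\mathscr{P}_\mu$-measurable separating partitions, which in turn matches the fiberwise description of $\supp(\lambda_n^\mu)$; and the ergodicity of $\lambda_n^\mu$ follows from the relative K (hence relatively weakly mixing) property of the Pinsker extension. The technical point you flag---turning the vanishing of $\int_Y\prod_i\mu_y(U_i)\,d\nu(y)$ into an honest \emph{admissible} $\mathscr{P}_\mu$-measurable partition (where admissibility involves closures)---is exactly where the work lies in \cite{HY06}, and is handled there by shrinking the $U_i$ to have disjoint closures and partitioning $Y$ according to which $\mu_y(U_i)$ vanish.
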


In order to find where the entropy is concentrated,
the notion of entropy sets was introduced by Dou-Ye-Zhang \cite{DYZ06}
and Blanchard-Huang \cite{BH08}, independently.
In this paper, we follow the definition in \cite{BH08} which requires entropy sets to be closed.

\begin{defn}
Let $(X,T)$ be a dynamical system and $\mu\in M(X,T)$.
A closed subset $K$ of $X$ with at least two points is called
\begin{enumerate}
\item an \emph{entropy set} if for any open cover $\mathcal{U}$ of $X$, admissible with respect to $K$,
one has $\htop(T,\mathcal{U})>0$.
\item a \emph{$\mu$-entropy set} if for any measurable partition $\alpha$ of $X$, admissible with respect to $K$,
one has $h_{\mu}(T,\alpha)>0$.
\end{enumerate}
\end{defn}

Denote by $E_s(X,T )$ the collection of all entropy sets and
by $E^\mu_s(X,T)$ the collection of all $\mu$-entropy sets.
It follows immediately from definitions that a closed subset $K$ of $X$ with at
least two points is an entropy set (resp. $\mu$-entropy set) if and only if for every $n\geq 2$,
every $n$-distinct $n$ points $k_1,\dotsc,k_n\in K$ one has $(k_1,\dotsc,k_n)\in E_n(X,T)$
(resp. $(k_1,\dotsc,k_n)\in E_n^\mu(X,T)$.

Define
\[H(X,T)=\overline{E_s(X,T )}\text{ and }H^\mu(X,T)=\overline{E^\mu_s (X,T)}.\]
A point $x\in X$ is called an \emph{entropy point} if $\{x\}\in H(X,T)$;
a \emph{$\mu$-entropy point} if $\{x\}\in H^\mu(X,T)$.
The set of all entropy points and $\mu$-entropy points of $(X,T)$ is
denoted by $E_1(X,T)$ and $E_1^\mu(X,T)$, respectively.
\begin{thm}[\cite{BH08}] \label{thm:E1-mu}
Let $(X,T)$ be a dynamical system.
If $\mu\in M^e(X,T)$ with $h_\mu(X,T)>0$, then $E_1^\mu(X,T)=\supp(\mu)$.
\end{thm}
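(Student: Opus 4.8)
The plan is to prove the two inclusions separately, and I expect only the inclusion $\supp(\mu)\subset E_1^\mu(X,T)$ to require genuine work.

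For $E_1^\mu(X,T)\subset\supp(\mu)$ I would first note that each coordinate marginal of $\lambda_n^\mu=\int_Y\mu_y^{(n)}\,d\nu(y)$ equals $\int_Y\mu_y\,d\nu(y)=\mu$, so $\supp(\lambda_n^\mu)\subset(\supp\mu)^n$. By Proposition~\ref{prop:supp-lambda} every entropy $n$-tuple therefore has all its coordinates in $\supp(\mu)$; since (by the characterization preceding the theorem) every point of a $\mu$-entropy set is a coordinate of some entropy pair, every $\mu$-entropy set is contained in the closed set $\supp(\mu)$. Passing to the Hausdorff-metric closure, $H^\mu(X,T)=\overline{E^\mu_s(X,T)}$ consists of closed subsets of $\supp(\mu)$, whence any $x$ with $\{x\}\in H^\mu(X,T)$ lies in $\supp(\mu)$.

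For the reverse inclusion I would begin with the reduction that, because a two-point set $\{z_1,z_2\}$ is a $\mu$-entropy set exactly when $(z_1,z_2)\in E_2^\mu(X,T)$ (the conditions for $n\geq 3$ being vacuous), it suffices to show: for every $x\in\supp(\mu)$ and every $\varepsilon>0$ there is an entropy pair $(z_1,z_2)$ with $z_1,z_2\in B(x,\varepsilon)$; indeed $\{z_1,z_2\}\in E^\mu_s(X,T)$ then satisfies $d_H(\{z_1,z_2\},\{x\})<\varepsilon$, forcing $\{x\}\in H^\mu(X,T)$. To produce such pairs I would examine the diagonal point $(x,x)$ inside $W:=\supp(\lambda_2^\mu)$. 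On one hand, for any neighborhood $V$ of $x$ one has $\lambda_2^\mu(V\times V)=\int_Y\mu_y(V)^2\,d\nu(y)\geq\bigl(\int_Y\mu_y(V)\,d\nu(y)\bigr)^2=\mu(V)^2>0$ by the Cauchy--Schwarz (Jensen) inequality, so $(x,x)\in W$. On the other hand, by Proposition~\ref{prop:supp-lambda} the entropy pairs are precisely $E_2^\mu(X,T)=W\setminus\Delta_2(X)$, and I claim $\Delta_2(X)$ is $\lambda_2^\mu$-null; granting this, $W\setminus\Delta_2(X)$ is dense in $W$ (a closed null set has empty interior in the support), so there are entropy pairs $(z_1,z_2)$ arbitrarily close to $(x,x)$, i.e.\ with $z_1,z_2\in B(x,\varepsilon)$, as required.

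The heart of the argument, and the step I expect to be the main obstacle, is thus the claim $\lambda_2^\mu(\Delta_2(X))=0$. Here I would use that $\lambda_2^\mu$ is ergodic (Proposition~\ref{prop:supp-lambda}) while $\Delta_2(X)$ is a closed $T^{(2)}$-invariant set, so $\lambda_2^\mu(\Delta_2(X))\in\{0,1\}$, and it remains only to exclude the value $1$. If it were $1$ then $\mu_y^{(2)}(\Delta_2(X))=1$ for $\nu$-a.e.\ $y$, i.e.\ $\mu_y$ is a point mass for $\nu$-a.e.\ $y$; this means the Pinsker factor $\pi\colon(X,\B_\mu,\mu,T)\to(Y,\mathscr{D},\nu,S)$ is (mod $\mu$) an isomorphism and hence $\mathscr{P}_\mu=\B_\mu$, contradicting $h_\mu(T)>0$. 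This positivity-of-entropy input is exactly where the hypothesis is consumed, and it is the only point at which one must return to the measure-theoretic definition of the Pinsker factor and its disintegration. I note that this route sidesteps the hyperspace and Mycielski machinery of Section~\ref{sec:hyperspace}: since singletons are already approximated by two-point entropy sets, neither Proposition~\ref{prop:G-delta} nor Theorem~\ref{lem:Ulam} is needed for this particular statement.
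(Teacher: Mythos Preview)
The paper does not prove this theorem: it is quoted from \cite{BH08} (Blanchard--Huang) and stated without proof, so there is no in-paper argument to compare against.

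That said, your proposed proof is correct and is essentially the natural argument (and close in spirit to what \cite{BH08} does). Both inclusions are handled cleanly: the marginal computation gives $\supp(\lambda_n^\mu)\subset(\supp\mu)^n$ and hence $E_1^\mu(X,T)\subset\supp(\mu)$, while for the converse the Jensen/Cauchy--Schwarz step $\lambda_2^\mu(V\times V)\ge\mu(V)^2>0$ places every diagonal point $(x,x)$ with $x\in\supp(\mu)$ inside $\supp(\lambda_2^\mu)$, and the nullity of the diagonal then supplies nearby off-diagonal points, i.e.\ entropy pairs approximating $\{x\}$. Your justification of $\lambda_2^\mu(\Delta_2(X))=0$ via ergodicity of $\lambda_2^\mu$ (Proposition~\ref{prop:supp-lambda}) plus the observation that $\mu_y\times\mu_y(\Delta_2(X))=1$ forces $\mu_y$ to be a Dirac mass, hence the Pinsker factor to be an isomorphism and $h_\mu(T)=0$, is a valid way to consume the positive-entropy hypothesis. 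One minor remark: you do not actually need the ergodicity detour---it is a standard consequence of $h_\mu(T)>0$ that the Pinsker conditional measures $\mu_y$ are non-atomic for $\nu$-a.e.\ $y$, which gives $\lambda_2^\mu(\Delta_2(X))=\int\sum_x\mu_y(\{x\})^2\,d\nu(y)=0$ directly---but your route is perfectly sound and uses only what the paper has already recorded.
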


It is not hard to see that
\[H(X,T)=E_s(X,T)\cup\{\{x\}\colon x\in E_1(X,T)\}\]
and
\[H^\mu(X,T)=E^\mu_s(X,T)\cup\{\{x\}\colon x\in E_1^\mu(X,T)\}.\]
It is shown in \cite{BH08} that if an ergodic invariant measure $\mu\in M^e(X,T)$ has positive entropy,
then $(H^\mu(X,T),\hat T)$ is an E-system.
We could extend the result as follows.

\begin{thm} \label{thm:Hmu-Wm-E}
Let $(X,T)$ be a dynamical system.
If $\mu\in M^e(X,T)$ with $h_\mu(X,T)>0$,
then $(H^\mu(X,T), \hat T)$ is a weakly mixing E-system.
\end{thm}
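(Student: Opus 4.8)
The plan is to transfer everything about the induced system $(H^\mu,\hat T)$ on the hyperspace to the product systems $(\supp(\lambda_n^\mu),T^{(n)})$, for which Proposition~\ref{prop:supp-lambda} already supplies the decisive input that they are E-systems. The bridge is the description of $\mu$-entropy sets by entropy tuples: a closed set $K$ with at least two points is a $\mu$-entropy set if and only if $K^n\setminus\Delta^{(n)}\subset\supp(\lambda_n^\mu)$ for every $n\geq 2$, and, since the measures are consistent under coordinate projections, every sub-tuple of distinct coordinates of a point of $\supp(\lambda_n^\mu)$ again lies in the appropriate $\supp(\lambda_m^\mu)$. Hence any finite set of points realizing an entropy tuple is itself a $\mu$-entropy set and therefore a point of $H^\mu$; these finite entropy sets are the witnesses I will manufacture. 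Throughout I work with basic Vietoris open sets $\langle G_1,\dots,G_k\rangle=\{A\in 2^X:A\subset\bigcup_j G_j,\ A\cap G_j\neq\emptyset\ \forall j\}$ and use that $E^\mu_s(X,T)$ is dense in $H^\mu$ by definition, so every nonempty open subset of $H^\mu$ contains a genuine (perfect) entropy set.

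For the E-system part I first treat transitivity. Given nonempty Vietoris sets $\mathcal U=\langle G_1,\dots,G_k\rangle$ and $\mathcal V=\langle W_1,\dots,W_l\rangle$ meeting $H^\mu$, I choose entropy sets inside them and, exploiting that each entropy set is perfect and meets every constituent open set, select distinct entropy points indexed by pairs $(j,s)$ to obtain a nonempty relatively open set $\mathcal G=\bigl(\prod_{(j,s)}G_j\bigr)\cap(\supp(\lambda_p^\mu)\setminus\Delta^{(p)})$ in $\supp(\lambda_p^\mu)$ with $p=kl$, and likewise $\mathcal W$ from $\mathcal V$. Transitivity of $(\supp(\lambda_p^\mu),T^{(p)})$ gives $n\in N(\mathcal G,\mathcal W)$; the witnessing tuple has distinct coordinates, hence is a finite entropy set lying in $\mathcal U$ and carried by $\hat T^n$ into $\mathcal V$, so $N(\mathcal U,\mathcal V)\neq\emptyset$. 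For a fully supported invariant measure I push forward: the assignment of an entropy $p$-tuple to the set of its coordinates is a Borel, $\hat T$-equivariant map $\supp(\lambda_p^\mu)\to H^\mu$, and the image of $\lambda_p^\mu$ is an invariant measure which I claim has full support; this, with transitivity (and Lemma~\ref{thm:E-M-system}), yields the E-system property and is consistent with \cite{BH08}.

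For weak mixing I reduce the two-pair condition to a single hitting time. Given pairs $(\mathcal U_1,\mathcal V_1)$ and $(\mathcal U_2,\mathcal V_2)$ I form boxes $\mathcal G_i,\mathcal W_i$ in $\supp(\lambda_{p_i}^\mu)$ as above and set $N=p_1+p_2$. Since $\lambda_N^\mu$ is the relatively independent joining of $\lambda_{p_1}^\mu$ and $\lambda_{p_2}^\mu$ over the Pinsker factor $(Y,S)$, the system $(\supp(\lambda_N^\mu),T^{(N)})$ sits inside the corresponding fibre product and is again an E-system. A single $n$ lying in $N\bigl((\mathcal G_1\times\mathcal G_2)\cap\supp(\lambda_N^\mu),\ (\mathcal W_1\times\mathcal W_2)\cap\supp(\lambda_N^\mu)\bigr)$ would, after splitting coordinates into the two blocks and passing to sub-tuples, produce entropy sets $A_1\in\mathcal U_1$ and $A_2\in\mathcal U_2$ with $\hat T^n A_i\in\mathcal V_i$ simultaneously, that is $n\in N(\mathcal U_1,\mathcal V_1)\cap N(\mathcal U_2,\mathcal V_2)$; the transitivity of $(\supp(\lambda_N^\mu),T^{(N)})$ then finishes the argument.

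The step I expect to be the main obstacle is the \emph{nonemptiness} of the combined boxes $(\mathcal G_1\times\mathcal G_2)\cap\supp(\lambda_N^\mu)$ and $(\mathcal W_1\times\mathcal W_2)\cap\supp(\lambda_N^\mu)$. Because $\supp(\lambda_N^\mu)$ contains only tuples whose two blocks lie over a \emph{common} point of $Y$, the two prescribed configurations must be realizable over a common Pinsker fibre, and similarly at the target end after time $n$; a priori the $\nu$-positive sets of fibres charging $\mathcal G_1$ and $\mathcal G_2$ could be disjoint. Overcoming this is where positivity of entropy must be used beyond Proposition~\ref{prop:supp-lambda}: the natural route is to exploit that the extension $X\to Y$ is relatively weakly mixing over the zero-entropy Pinsker factor, so that the relatively independent joinings $\lambda_N^\mu$ spread entropy tuples densely across the fibres, and then to combine this fibrewise abundance with transitivity of $(\supp(\lambda_N^\mu),T^{(N)})$ to locate the common time $n$. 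This relative-mixing input over the Pinsker factor is the heart of the upgrade from the E-system statement of \cite{BH08} to weak mixing, and I would expect the careful bookkeeping of fibres there to be the only genuinely delicate point; the reductions in the preceding paragraphs are then routine given Proposition~\ref{prop:supp-lambda} and Theorem~\ref{thm:E1-mu}.
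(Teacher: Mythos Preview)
Your reduction of the problem on $H^\mu$ to the transitive product systems $(\supp(\lambda_n^\mu),T^{(n)})$ via finite entropy tuples is exactly the paper's strategy, and the E-system part is already in \cite{BH08}, so only weak mixing is at issue. You have also correctly isolated the one real difficulty: having chosen entropy tuples for $\mathcal U_1$ and $\mathcal U_2$ \emph{separately}, there is no reason why the concatenated $N$-tuple should lie in $\supp(\lambda_N^\mu)$, because the two blocks may live over different Pinsker fibres. Your proposed remedy via relative weak mixing over the Pinsker factor is plausible in spirit but you do not carry it out, and as stated it does not obviously close the gap: positivity of $\lambda_{p_i}^\mu(\mathcal G_i)$ only gives $\nu$-positive (not invariant) sets of good fibres for each $i$, and ergodicity of $\nu$ alone does not force their intersection to be $\nu$-positive.

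The paper sidesteps this obstacle entirely, with no appeal to relative weak mixing. The key observation is that $\mathbb D(\{R_n\})\cap\Perf(X)$, where $R_n=\Trans(\supp(\lambda_n^\mu),T^{(n)})$, is a dense $G_\delta$ subset of $H^\mu$ (Proposition~\ref{prop:G-delta} plus density of $R_n$ in $\supp(\lambda_n^\mu)$). Given $\mathcal V_1,\mathcal V_2$, one first picks disjoint finite sets $P_1'',P_2''$ approximating elements of $\mathcal V_1,\mathcal V_2$, and then chooses a \emph{single} perfect set $P\in\mathbb D(\{R_n\})$ Hausdorff-close to the union $P_1''\cup P_2''$. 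Splitting $P$ into the pieces near $P_1''$ and near $P_2''$ yields perfect sets $P_1''',P_2'''$ from which one extracts finite $P_1,P_2$; since $P_1\cup P_2\subset P$ and $P$ is $\{R_n\}$-dependent, the concatenated tuple $(p_{1,1},\dots,p_{1,n},p_{2,1},\dots,p_{2,n})$ automatically lies in $R_{2n}\subset\supp(\lambda_{2n}^\mu)$. Doing the same on the $\mathcal U$-side, plain transitivity of $(\supp(\lambda_{2m}^\mu),T^{(2m)})$ (from Proposition~\ref{prop:supp-lambda}) produces the common time. In short: do not pick two tuples and try to glue them over a common fibre; pick one dependent set approximating the union and split it. With this modification your argument goes through without further machinery.
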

\begin{proof}
We are left to show that $(H^\mu(X,T), \hat T)$ is weakly mixing.
By Proposition~\ref{prop:supp-lambda} and Theorem~\ref{thm:E1-mu},
$H^\mu(X,T)=\mathbb{D}(\supp(\lambda_n^{\mu}))$.
For every $n\geq 1$, let $R_n=\Trans(\supp(\lambda_n^{\mu}), T^{(n)})$.
Then $R_{n}$ is a dense $G_\delta$ subset of $R_\pi^{(n)}$,
and it is a $G_\delta$ subset of $X^n$ since $R_\pi^{(n)}$ is closed in $X^n$.
By Proposition~\ref{prop:G-delta}, $\mathbb{D}(R_n)$ is a $G_\delta$ subset of $2^X$.
For every $n\geq 1$, $R_n$ is dense in $\supp(\lambda_n^{\mu})$, then $\mathbb{D}(R_n)$ is dense in $H^\mu(X,T)$.
So $\mathbb{D}(R_n)$ is a dense $G_\delta$ subset of $H^{\mu}(X,T)$.
By \cite[Theorem 4.5]{BH08}, $\Perf(X)\cap H^\mu(X,T)$ is also a dense $G_\delta$ subset of $H^\mu(X,T)$.

Let $U_1,V_1,U_2,V_2$ be non-empty open subsets of $H^\mu(X,T)$.
There exist perfect sets $P_i'\in H^\mu(X,T)$ and $\varepsilon_1>0$
such that $B(P_i',3\varepsilon_1)\subset V_i$ for $i=1,2$.
Pick finite subsets $P_i''\subset P'_i$ such that
$B(P''_i,\varepsilon_1)\subset B(P'_i,3\varepsilon_1)$ for $i=1,2$.
Since $P'_1$ and $P'_2$ are perfect, we can require $P''_1\cap P''_2=\emptyset$.
Let $\varepsilon_2=\min\{d(x,y)\colon x\in P''_1,y\in P''_2\}>0$.
Choose a perfect set $P\in \mathbb{D}(R_n)$ and $\varepsilon_3>0$ such that
$B(P,\varepsilon_3)\subset B(P''_1\cup P''_2, \varepsilon_2/4)$.
Let $P'''_i=P\cap \{x\in X\colon d(x,P''_i)<\varepsilon_2/2\}$ for $i=1,2$.
By the choice of $\varepsilon_2$, we have $P'''_i$ are perfect and
$B(P'''_i,\varepsilon_3)\subset B(P''_i,\varepsilon_2/4)$ for $i=1,2$.
Then there exist $n\in\N$, $\varepsilon>0$ and
$P_i:=\{p_{i,1},p_{i,2},\dotsc,p_{i,n}\}\subset P'''_i$
such that $B(P_i, \varepsilon)\subset B(P'''_i, \varepsilon_3)\subset V_i$ for $i=1,2$.
Since $P_1\cup P_2\subset P\in \mathbb{D}(R_n)$,
we have
\[(p_{1,1},p_{1,2},\dotsc,p_{1,n},p_{2,1},p_{2,2},\dotsc,p_{2,n})\in R_{2n}.\]
Similarly, there exist $m\geq n$, $\delta>0$ and $Q_i:=\{q_{i,1},q_{i,2},\dotsc,q_{i,m}\}$
such that $B(Q_i, \delta)\subset U_i$ for $i=1,2$ and
\[(q_{1,1},q_{1,2},\dotsc,q_{1,m},q_{2,1},q_{2,2},\dotsc,q_{2,m})\in R_{2m}.\]
Let
\[U=\prod_{j=1}^m B(q_{1,j},\delta)\times \prod_{j=1}^m B(q_{2,j},\delta)\cap \supp(\lambda^\mu_{2m})\]
and
\[V=\prod_{j=1}^m B(p_{1,j},\delta)\times \prod_{j=1}^m B(p_{2,j},\delta)\cap \supp(\lambda^\mu_{2m}),\]
where $p_{i,n+1}=p_{i,n+2}=\dotsb=p_{i,m}=p_{i,n}$ for $i=1,2$.
Since $(\supp(\lambda^\mu_{2m}),T^{(2m)})$ is transitive, $N(U,V)\neq\emptyset$.
Then it is sufficient to show that
\[N(U,V)\subset N(U_1\times U_2,V_1\times V_2).\]
Pick a $k\in N(U,V)$.
There is $(z_1,z_2,\dotsc,z_{2m})\in U$ such that
$(T^k(z_1),T^k(z_2),\dotsc,\allowbreak T^k(z_{2n}))\in V$.
Let $Z_1=\{z_1,z_2,\dotsc,z_m\}$ and $Z_2=\{z_{m+1},z_{m+2},\dotsc,z_{2m}\}$.
Then $Z_i\in B(Q_i,\delta)\subset U_i$ and
$T^k(Z_i)\in B(P_i,\varepsilon)\subset V_i$ for $i=1,2$,
which imply that $k\in N(U_1\times U_2,V_1\times V_2)$.
\end{proof}

\begin{thm} \label{thm:positive-entorpy-mixing-set}
Let $(X,T)$ be a dynamical system and $\mu\in M^e(X,T)$ with $h_\mu (T)>0$.
Suppose that $\mu=\int \mu_yd\nu$ is the disintegration of $\mu$ over the Pinsker factor $(Y,\nu,S)$.
Then there is a Borel subset $Y_0$ of $Y$ with $\nu(Y_0)=1$ such that for every $y\in Y_0$,
$\supp(\mu_y)$ is an $\F_{\pubd}$-mixing set.
Moreover, the collection of $\F_{\pubd}$-mixing sets is residual in $H^\mu(X,T)$.
\end{thm}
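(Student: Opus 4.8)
The plan is to reduce both conclusions to a single principle: a closed set $A\subset X$ with at least two points is $\F_{\pubd}$-mixing provided that for every $k\ge 1$ and every box of open sets meeting $A$ one can find a tuple of points of $A$ lying in that box which is an $\F_{\pubd}$-transitive point of the product system carried by $\supp(\lambda_k^\mu)$. To see this suffices, fix $k$ and open sets $U_1,\dots,U_k,V_1,\dots,V_k$ meeting $A$. By Proposition~\ref{prop:supp-lambda} the system $(\supp(\lambda_k^\mu),T^{(k)})$ is an E-system, so by Lemma~\ref{thm:E-M-system} its transitive points coincide with its $\F_{\pubd}$-transitive points. Choose $x_i\in U_i\cap A$ so that $\mathbf{x}=(x_1,\dots,x_k)$ is $\F_{\pubd}$-transitive; since $V_1\times\dots\times V_k$ meets $\supp(\lambda_k^\mu)$ — which holds in both applications below, as there every tuple of points of $A$ lies in $\supp(\lambda_k^\mu)$ — we get $N(\mathbf{x},V_1\times\dots\times V_k)\in\F_{\pubd}$. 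Every $n$ in this set satisfies $T^n(x_i)\in V_i$ with $x_i\in U_i\cap A$, hence
\[ N(\mathbf{x},V_1\times\dots\times V_k)\subset\bigcap_{i=1}^{k}N(U_i\cap A,V_i), \]
and upward heredity of $\F_{\pubd}$ gives $\bigcap_{i=1}^{k}N(U_i\cap A,V_i)\in\F_{\pubd}$.

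For the residuality statement I would reuse the objects built in the proof of Theorem~\ref{thm:Hmu-Wm-E}. There $R_n=\Trans(\supp(\lambda_n^\mu),T^{(n)})$ is shown to be a $G_\delta$ subset of $X^n$ and $\mathbb{D}(R_n)$ a dense $G_\delta$ subset of $H^\mu(X,T)$, while $\Perf(X)\cap H^\mu(X,T)$ is a dense $G_\delta$ subset as well. Put $G=\Perf(X)\cap\bigcap_{n\ge 2}\mathbb{D}(R_n)$, a residual subset of $H^\mu(X,T)$. Each $A\in G$ is perfect, and every tuple of distinct points of $A$ lies in $R_k$, i.e.\ is a transitive, hence $\F_{\pubd}$-transitive, point of $(\supp(\lambda_k^\mu),T^{(k)})$. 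Choosing the $x_i\in U_i\cap A$ distinct (possible since $A$ is perfect), the principle applies and shows that $A$ is $\F_{\pubd}$-mixing.

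For the almost-everywhere statement I would argue measure-theoretically. First, $\supp(\mu_y^{(k)})=\supp(\mu_y)^k$, and from $\lambda_k^\mu(\supp(\lambda_k^\mu))=1=\int_Y\mu_y^{(k)}(\supp(\lambda_k^\mu))\,d\nu(y)$ one gets $\supp(\mu_y)^k\subset\supp(\lambda_k^\mu)$ for $\nu$-a.e.\ $y$. Since $\lambda_k^\mu$ is ergodic (Proposition~\ref{prop:supp-lambda}), $\lambda_k^\mu$-a.e.\ point is generic and so has dense orbit in $\supp(\lambda_k^\mu)$, hence is a transitive, hence $\F_{\pubd}$-transitive, point (Lemma~\ref{thm:E-M-system}); call $G_k$ this conull set. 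As $\lambda_k^\mu=\int_Y\mu_y^{(k)}\,d\nu(y)$, we have $\mu_y^{(k)}(G_k)=1$ for $\nu$-a.e.\ $y$. Finally $\mu_y$ is non-atomic for $\nu$-a.e.\ $y$: the map $y\mapsto\mu_y^{(2)}(\Delta_2(X))$ is $S$-invariant, hence $\nu$-a.e.\ equal to the constant $\lambda_2^\mu(\Delta_2(X))$, which is $<1$ because $h_\mu(T)>0$ forces $E_2^\mu(X,T)=\supp(\lambda_2^\mu)\setminus\Delta_2(X)\ne\emptyset$; thus $\mu_y$ is never a point mass and $\supp(\mu_y)$ has at least two points. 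Let $Y_0$ be the full-measure set of $y$ satisfying all of the above. For $y\in Y_0$ and open $U_1,\dots,U_k,V_1,\dots,V_k$ meeting $\supp(\mu_y)$, each $\mu_y(U_i)>0$, so the box $U_1\times\dots\times U_k$ has positive $\mu_y^{(k)}$-measure; since $G_k$ and $\supp(\mu_y)^k$ are $\mu_y^{(k)}$-conull they meet this box, yielding $\mathbf{x}\in G_k\cap\supp(\mu_y)^k$ with $x_i\in U_i\cap\supp(\mu_y)$. This $\mathbf{x}$ is the $\F_{\pubd}$-transitive witness required by the principle.

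I expect the measure-theoretic step to be the main obstacle: transferring the $\lambda_k^\mu$-conull set $G_k$ of $\F_{\pubd}$-transitive points to the fibres $\mu_y^{(k)}$, extracting a witness lying simultaneously in $G_k$, in $\supp(\mu_y)^k$, and in the prescribed box, and establishing that $\mu_y$ is non-atomic almost everywhere. The residuality half, by contrast, is essentially bookkeeping on top of the $G_\delta$ and density facts already established for Theorem~\ref{thm:Hmu-Wm-E}.
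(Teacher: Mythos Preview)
Your argument is correct and follows essentially the same route as the paper. Both proofs hinge on the same principle: locate, inside any prescribed box of open sets meeting $A$, a tuple of points of $A$ which is a transitive (hence $\F_{\pubd}$-transitive, by Lemma~\ref{thm:E-M-system}) point of $(\supp(\lambda_k^\mu),T^{(k)})$, and then use the inclusion $N(\mathbf{x},V_1\times\dotsb\times V_k)\subset\bigcap_i N(U_i\cap A,V_i)$. The residuality half is identical in substance: the paper packages the residual set as $Q_\mu$ (perfect $\mu$-entropy sets admitting $\{R_n\}$-dependent approximations from within), but shows $Q_\mu$ is residual precisely because it contains your $G=\Perf(X)\cap\bigcap_{n\ge 2}\mathbb{D}(R_n)$.

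The one small difference is in the almost-everywhere half. The paper asserts (without proof, relying on known facts) that $\mu_y$ is non-atomic $\nu$-a.e., hence $\supp(\mu_y)$ is perfect, and then checks $\supp(\mu_y)\in Q_\mu$. Your ergodicity argument for $y\mapsto\mu_y^{(2)}(\Delta_2(X))$ only yields that $\mu_y$ is not a Dirac mass, not full non-atomicity; but since you apply the principle directly via the measure-theoretic witness $\mathbf{x}\in G_k\cap\supp(\mu_y)^k\cap(U_1\times\dotsb\times U_k)$ rather than going through $Q_\mu$, you never actually need $\supp(\mu_y)$ to be perfect---``at least two points'' suffices for the definition of an $\F_{\pubd}$-mixing set. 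So your route is marginally more self-contained here.
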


\begin{proof}
For every $n\geq 1$, let $R_n=\Trans(\supp(\lambda_n^{\mu}), T^{(n)})$.
We do not know the structure of the collection of $\F_{\pubd}$-mixing sets,
but we can handle a subclass $Q_\mu$ of $\mu$-entropy sets.
A perfect $\mu$-entropy set $A$ is in $Q_\mu$ if and only if
for every $\varepsilon>0$ there exists a subset $E$ of $A$
such that $E$ is $\{R_n\}$-dependent and $d_H(E,A)<\varepsilon$.
By the proof of Theorem~\ref{thm:Hmu-Wm-E}, we know that
$\mathbb{D}(R_n)\cap \Perf(X)$ is a dense $G_\delta$ subset of $H^\mu(X,T)$.
Then $Q_\mu$ is residual in $H^\mu(X,T)$.
We are going to show that every element in $Q_\mu$ is an $\F_{\pubd}$-mixing set.

Let $A\in Q_\mu$.
Fix $n\in\N$ and open subsets $U_1,U_2,\dotsc,U_n$, $V_1,V_2,\dotsc,V_n$ of $X$ intersecting $A$.
Let
\[U=U_1\times U_2\times \dotsb \times U_n \text{ and } V=V_1\times V_2\times \dotsb \times V_n.\]
Since $A$ is perfect, there are $n$ distinct points $x_1,x_2,\dotsc,x_n\in A$ such that
$(x_1,x_2,\dotsc,\allowbreak x_n)\in U$.
By the definition of $Q_\mu$,
we can require $(x_1,x_2,\dotsc,x_n)$ to be a transitive point of $(\supp(\lambda_n^{\mu}),T^{(n)})$.
By Theorem \ref{thm:E-M-system}, $(x_1,x_2,\dotsc,x_n)$ is also an $\F_{\pubd}$-transitive point.
So $N((x_1,x_2,\dotsc,x_n),V)\in\F_{\pubd}$.
Then
\[N((x_1,x_2,\dotsc,x_n),V)\subset \bigcap_{i=1}^n N(U_i\cap A,V_i)\in \F_{\pubd},\]
which implies that $A$ is $\F_{\pubd}$-mixing.

Since $(\supp(\lambda^\mu_n), T^{(n)})$ is ergodic, by the Birkhoff Ergodic Theorem we have $\lambda^\mu_n(R_n)=1$.
Then there exists a Borel subset $Y_0$ of $Y$
with $\nu(Y_0)=1$ such that $\mu_y$ is non-atomic
and $\mu_y^{(n)}(R_n)=1$ for each $y\in Y_0$ and $n\geq 1$.
Then for every $y\in Y_0$, $\supp(\mu_y)$ is perfect since $\mu_y$ is non-atomic,
and $\mathbb{D}(\{R_n\})\cap 2^{\supp(\mu_y)}$ is dense in $2^{\supp(\mu_y)}$
since $R_n\cap (\supp(\mu_y))^n$ is dense in $(\supp(\mu_y))^n$
for every $n\geq 2$. Therefore, $\supp(\mu_y)\in Q_\mu$ for  every $y\in Y_0$.
\end{proof}

\begin{cor}
Let $(X,T)$ be a dynamical system.
If $\htop (T)>0$, then the collection of $\F_{\pubd}$-mixing sets is dense in $H(X,T)$.
\end{cor}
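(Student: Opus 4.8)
The plan is to bridge the measure-theoretic Theorem~\ref{thm:positive-entorpy-mixing-set}, which works one ergodic measure at a time, to the topological object $H(X,T)=\overline{E_s(X,T)}$ by approximating topological entropy sets with measure entropy sets. Since $H(X,T)$ is the closure of $E_s(X,T)$, it is enough to show that the $\F_{\pubd}$-mixing sets are dense in $E_s(X,T)$: their hyperspace-closure then contains $E_s(X,T)$, hence, being closed, it also contains $\overline{E_s(X,T)}=H(X,T)$. By the variational principle, $\htop(T)>0$ supplies an ergodic measure of positive entropy, so there is at least one measure to which Theorem~\ref{thm:positive-entorpy-mixing-set} may be applied.

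First I would fix an entropy set $K\in E_s(X,T)$ and $\varepsilon>0$, and select pairwise distinct points $x_1,\dotsc,x_m\in K$ (with $m\geq 2$) forming an $\varepsilon$-net of $K$, so that $d_H(\{x_1,\dotsc,x_m\},K)<\varepsilon$. Because $K$ is an entropy set, every tuple of distinct points of $K$ is a topological entropy tuple; in particular $(x_1,\dotsc,x_m)\in E_m(X,T)$. The key input is the local variational principle for entropy tuples: every topological entropy tuple is a $\mu$-entropy tuple for some \emph{ergodic} measure $\mu$ with $h_\mu(T)>0$. Applying it to $(x_1,\dotsc,x_m)$ produces one ergodic $\mu$ of positive entropy with $(x_1,\dotsc,x_m)\in E_m^\mu(X,T)$. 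Since $\lambda_k^\mu$ is the $k$-fold coordinate projection of $\lambda_m^\mu$ for $k\leq m$, the description $\supp(\lambda_n^\mu)\setminus\Delta_n(X)=E_n^\mu(X,T)$ of Proposition~\ref{prop:supp-lambda} shows that measure entropy tuples are stable under passing to distinct (and permuted) sub-tuples. Hence every tuple of distinct points of $\{x_1,\dotsc,x_m\}$ lies in the appropriate $E_k^\mu(X,T)$, i.e. the finite set $\{x_1,\dotsc,x_m\}$ is a $\mu$-entropy set and so belongs to $H^\mu(X,T)$.

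Now I would invoke Theorem~\ref{thm:positive-entorpy-mixing-set}: the $\F_{\pubd}$-mixing sets are residual, in particular dense, in $H^\mu(X,T)$. Therefore there is an $\F_{\pubd}$-mixing set $A$ with $d_H(A,\{x_1,\dotsc,x_m\})<\varepsilon$, and the triangle inequality gives $d_H(A,K)<2\varepsilon$. As $K\in E_s(X,T)$ and $\varepsilon>0$ are arbitrary, the $\F_{\pubd}$-mixing sets are dense in $E_s(X,T)$, hence in $H(X,T)$. One may equivalently package the argument as the identity $H(X,T)=\overline{\bigcup_\mu H^\mu(X,T)}$, the union taken over ergodic measures of positive entropy, combined with the density of $\F_{\pubd}$-mixing sets inside each $H^\mu(X,T)$.

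The step I expect to be the main obstacle is the local variational principle used above: that a topological entropy $m$-tuple is realized by some \emph{ergodic} measure of positive entropy, rather than merely by an invariant one. This is the only non-elementary ingredient and is precisely what lets us replace the ambient $H(X,T)$ by a single $H^\mu(X,T)$ on which Theorem~\ref{thm:positive-entorpy-mixing-set} applies; it belongs to local entropy theory and relies on the support description in Proposition~\ref{prop:supp-lambda} together with an ergodic-decomposition argument. The remaining steps---choosing the net, the sub-tuple stability, and the two triangle-inequality approximations---are routine Hausdorff-metric bookkeeping.
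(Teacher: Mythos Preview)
Your proposal is correct and follows essentially the same route as the paper: the paper's proof simply cites \cite[Theorems 2.9 and 2.10]{BH08} for the density of $\bigcup_{\mu\in M^e(X,T)} H^\mu(X,T)$ in $H(X,T)$ and then applies Theorem~\ref{thm:positive-entorpy-mixing-set}, which is exactly the ``packaged'' version you mention at the end. Your longer argument amounts to reproving that density statement by hand via the local variational principle for entropy tuples---so your identification of the key obstacle is accurate, and the paper sidesteps it only by citation rather than by a genuinely different idea.
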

\begin{proof}
By Theorems 2.9 and 2.10 of \cite{BH08}, $\bigcup_{\mu\in M^e(X,T)} H^\mu(X,T)$ is dense in $H(X,T)$.
Then the result follows from Theorem~\ref{thm:positive-entorpy-mixing-set}.
\end{proof}

\begin{rem}
It is interesting to know that whether the collection of $\F_{\pubd}$-mixing sets is residual in $H(X,T)$.
It is shown in \cite[Theorem 4.4]{BH08} that the collection of weakly mixing sets is a $G_\delta$ subset of $2^X$.
But we do not know  whether the collection of $\F_{\pubd}$-mixing sets is a $G_\delta$ subset of $2^X$.
\end{rem}

\section{\texorpdfstring{$\F$}{F}-(point) mixing extensions}
In this section, we study factor maps between dynamical systems.
We show that if an intrinsic factor map is $\F$-point mixing,
then the collection of $\F$-mixing sets is residual in the fiber space.
We also apply this result to non-PI factors maps between minimal systems.

Let $\pi:(X,T)\to(Y,S)$ be a factor map between two dynamical systems.
For every $n\geq 2$, denote
\[R_\pi^{(n)}=\{(x_1,\ldots,x_n)\in X^n: \pi(x_1)=\cdots=\pi(x_n)\}.\]
Inspiring by the idea of entropy sets, we define the \emph{fiber space} of $\pi$ as
\[H_\pi=\bigcup_{y\in Y} 2^{\pi^{-1}(y)}= \mathbb D(\{R_\pi^{(n)}\}).\]
It is clear that $H_\pi$ is $\hat T$-invariant and it is closed by Lemma~\ref{lem:Hpi-closed}.
Then $(H_\pi,\hat{T})$ is a subsystem of $(2^X,\hat{T})$.
\begin{lem}\label{lem:Hpi-closed}
$H_\pi$ is a closed in $2^X$.
\end{lem}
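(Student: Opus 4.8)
The plan is to prove closedness directly by sequential compactness, using only the continuity of $\pi$ and the compactness of $Y$. Let $\{A_k\}$ be a sequence in $H_\pi$ converging to some $A$ in the Hausdorff metric $d_H$; I want to show $A\in H_\pi$. By the definition of $H_\pi$, each $A_k$ lies in a single fiber, so there is a point $y_k\in Y$ with $A_k\subset\pi^{-1}(y_k)$, equivalently $\pi(A_k)=\{y_k\}$. Since $Y$ is compact, after passing to a subsequence I may assume $y_k\to y$ for some $y\in Y$. The goal then becomes to verify that $A\subset\pi^{-1}(y)$, for then $A$, being a non-empty closed subset of the fiber $\pi^{-1}(y)$, belongs to $H_\pi$.

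The key step is a standard consequence of Hausdorff convergence: for every $x\in A$ there exist points $x_k\in A_k$ with $x_k\to x$. Indeed, the convergence $A_k\to A$ gives $A\subset B(A_k,\varepsilon_k)$ with $\varepsilon_k\to 0$, so one may choose $x_k\in A_k$ with $d(x,x_k)<\varepsilon_k$. Applying $\pi$ and using its continuity yields $\pi(x)=\lim_k\pi(x_k)=\lim_k y_k=y$. As $x\in A$ was arbitrary, $A\subset\pi^{-1}(y)$, which completes the argument.

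I do not expect a genuine obstacle here; the only point requiring care is the bookkeeping of subsequences, namely that after extracting the subsequence along which $y_k\to y$ the sets $A_k$ still converge to $A$, so that the pointwise approximation $x_k\to x$ remains valid along that same subsequence. Everything else is routine.

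Alternatively, one can phrase the proof without sequences: the induced map $\hat\pi\colon 2^X\to 2^Y$ given by $A\mapsto\pi(A)$ is continuous for the Hausdorff metrics (see \cite{A04,IN99}), and the collection $\{\{y\}\colon y\in Y\}$ of singletons is a closed subset of $2^Y$, being the homeomorphic image of the compact space $Y$ under $y\mapsto\{y\}$. Since $A\in H_\pi$ exactly when $\pi(A)$ is a singleton, $H_\pi=\hat\pi^{-1}(\{\{y\}\colon y\in Y\})$ is the preimage of a closed set under a continuous map, hence closed.
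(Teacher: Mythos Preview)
Your proof is correct and follows essentially the same sequential approach as the paper: take a convergent sequence $A_k\to A$ in $H_\pi$, approximate points of $A$ by points of $A_k$, and use continuity of $\pi$ to conclude $A$ lies in a single fiber. The paper's version is marginally more economical in that it avoids extracting a convergent subsequence of the $y_k$: instead of first identifying the limit $y$, it takes any two points $a,b\in A$, approximates them by $a_n,b_n\in A_n$, notes $\pi(a_n)=\pi(b_n)$, and passes to the limit to get $\pi(a)=\pi(b)$; this bypasses any appeal to compactness of $Y$. Your alternative argument via the continuity of the induced map $\hat\pi\colon 2^X\to 2^Y$ and the closedness of the set of singletons is a clean, more conceptual route that the paper does not mention.
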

\begin{proof}
Let $\{A_n\}$ be a sequence in $H_\pi$ with $A_n\to A$ as $n\to\infty$.
For every two points $a,b\in A$, there are $a_n, b_n\in A_n$ such that $a_n \to a, b_n\to b$ as $n\to\infty$.
Since $A_n$ is in $H_\pi$, we have $\pi(a_n)=\pi(b_n)$.
By continuity of $\pi$, $\pi (a)=\pi (b)$.
Then there exists $y\in Y$ such that $A\subset \pi^{-1}(y)$, which implies that $A\in H_\pi$.
\end{proof}

\begin{rem}
It is clear that for every $x\in X$, $\{x\}\in H_\pi$. Then $(X,T)$ can be regarded as a system of $(H_\pi,\hat{T})$.
If $\pi\colon X\to Y$ is a homeomorphism, then $H_\pi$ is homeomorphic to $X$.
This case is not interesting in our consideration.
We say that a factor map $\pi\colon (X,T)\to (Y,S)$ is \emph{intrinsic} if $\pi$ is not a homeomorphism.
\end{rem}

We say that the factor map $\pi\colon (X,T)\to (Y,S)$ is \emph{weak mixing} of order $n\in\mathbb{N}\setminus\{1\}$
if $(R_\pi^{(n)}, T^{(n)})$ is transitive;
\emph{weak mixing of all finite orders} if it is weakly mixing of order $m$ for every $m\geq 2$.
Note that there exists some factor maps which is weak mixing of order $2$ but not order $3$ (see \cite{G05} and \cite{OZ11}).

\begin{lem}\label{lem:wm-perf}
Let $\pi\colon (X,T)\to(Y,S)$ be an intrinsic factor map between two dynamical systems.
If $\pi$ is weakly mixing of all finite orders, then
$H_\pi\cap\Perf(X)$ is a dense $G_\delta$ subset of $H_\pi$.
\end{lem}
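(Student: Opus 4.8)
The plan is to prove the two assertions by quite different means: the ``$G_\delta$'' half is soft, while all the work goes into density. Since $\Perf(X)=\bigcap_{n\ge 1}L_n(X)$ with each $L_n(X)$ open in $2^X$ (Section~\ref{sec:hyperspace}), each $L_n(X)\cap H_\pi$ is open in $H_\pi$, and $\Perf(X)\cap H_\pi=\bigcap_n\bigl(L_n(X)\cap H_\pi\bigr)$ is automatically a $G_\delta$ subset of $H_\pi$. Moreover $H_\pi$ is closed in the compact space $2^X$ (Lemma~\ref{lem:Hpi-closed}), hence compact and therefore a Baire space. So it suffices to show that each open set $L_n(X)\cap H_\pi$ is dense in $H_\pi$; the conclusion then follows from the Baire category theorem, which assembles perfectness for free without my having to build a full Cantor set by hand.

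The engine for density, and the one place where weak mixing of all finite orders enters, is the following richness observation: for every $N\ge 2$ the set $R_\pi^{(N)}\setminus\Delta^{(N)}$ of tuples lying in a common fibre with pairwise distinct coordinates is dense in $R_\pi^{(N)}$. Indeed, weak mixing of order $N$ makes $(R_\pi^{(N)},T^{(N)})$ transitive, so its transitive points are dense in $R_\pi^{(N)}$. A transitive point cannot have two equal coordinates: if $x_i=x_j$ with $i\neq j$, then its orbit, hence its orbit closure $R_\pi^{(N)}$, would sit inside the closed invariant set $\{w:w_i=w_j\}$, which forces every fibre of $\pi$ to be a singleton and $\pi$ to be a homeomorphism, contradicting that $\pi$ is intrinsic. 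Thus the transitive points already witness the density of $R_\pi^{(N)}\setminus\Delta^{(N)}$ (this also shows $R_\pi^{(N)}$ is perfect, so the quoted ``transitive $\Rightarrow$ transitive points dense'' fact applies without caveat).

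To prove $L_n(X)\cap H_\pi$ is dense, I would fix $A\in H_\pi$, say $A\subset\pi^{-1}(y_0)$, and $\varepsilon>0$. Choose a finite $\tfrac{\varepsilon}{2}$-net $\{a_1,\dots,a_k\}\subset A$ and form the doubled tuple $(a_1,a_1,\dots,a_k,a_k)\in R_\pi^{(2k)}$. Applying the richness observation with $N=2k$, I approximate it by a point $(u_1,v_1,\dots,u_k,v_k)\in R_\pi^{(2k)}\setminus\Delta^{(2k)}$ each of whose coordinates lies within $\min\{\tfrac{\varepsilon}{2},\tfrac{1}{2n}\}$ of the corresponding entry. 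Then $K=\{u_1,v_1,\dots,u_k,v_k\}$ is a $2k$-point subset of a single fibre $\pi^{-1}(y_1)$, so $K\in H_\pi$, with $d_H(A,K)<\varepsilon$ and $d(u_i,v_i)<\tfrac{1}{n}$ for each $i$. Covering each pair $\{u_i,v_i\}$ by an open set of diameter $<\tfrac{1}{n}$ exhibits $K\in L_n(X)$, so $K\in L_n(X)\cap H_\pi$ lies in the $\varepsilon$-ball about $A$.

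The main obstacle is precisely the common-fibre constraint built into $H_\pi$: to produce a near-by element that is ``locally doubled'' at scale $1/n$ one cannot split the net points $a_i$ independently, because the two new points near each $a_i$ must lie in a single fibre shared with all the other new points simultaneously. This is exactly what the richness observation overcomes, by finding one distinct-coordinate tuple in $R_\pi^{(2k)}$ close to the doubled net; weak mixing of arbitrarily high order is unavoidable here since $k$ grows with the required accuracy. Everything else is routine, and the Baire argument of the first paragraph then delivers that $\Perf(X)\cap H_\pi$ is a dense $G_\delta$ subset of $H_\pi$.
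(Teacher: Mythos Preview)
Your proof is correct and follows essentially the same approach as the paper's: both reduce to showing each $L_n(X)\cap H_\pi$ is dense in $H_\pi$, approximate $A\in H_\pi$ by a finite subset, double it to a tuple in $R_\pi^{(2k)}$, and use a transitive point of $(R_\pi^{(2k)},T^{(2k)})$ nearby---whose coordinates are pairwise distinct because $\pi$ is intrinsic---to produce the desired element of $L_n(X)\cap H_\pi$. The only cosmetic differences are that the paper orders the doubled tuple as $(x_1,\dots,x_k,x_1,\dots,x_k)$ rather than $(a_1,a_1,\dots,a_k,a_k)$, and you package the ``transitive points have distinct coordinates'' step as a separate richness observation before applying it.
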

\begin{proof}
Recall that $\Perf(X)=\bigcap_{n=1}^\infty L_n$ and each $L_n$ is open.
It is sufficient to show that $L_n(X)\cap H_\pi$ is dense in $H_\pi$ for every $n\in\mathbb N$.
Now fix $n\in\mathbb N$, $\varepsilon>0$ and $E\in H_\pi$.
There exists a finite set $F=\{x_1,x_2,\ldots,x_k\}\in H_\pi$ such that $d_H(E,F)<\varepsilon$.
By the definition of $H_\pi$, one has
$(x_1,x_2,\ldots,x_k, x_1,x_2,\ldots,x_k)\in R_\pi^{(2k)}\backslash \Delta_{2k}(X)$.
Since $(R_\pi^{(2k)},T^{(2k)})$ is transitive,
there exists a transitive point $(y_1,y_2,\ldots,y_{2k})\in  R_\pi^{(2k)}$ such that
\[\max\{d(x_i,y_i),d(x_i,y_{k+i})\} <\min\{\tfrac{1}{2n}, \varepsilon\},\quad i=1,2,\ldots,k.\]
Then
\[d_H(\{y_1,y_2,\ldots,y_{2k}\}, E)\leq d_H(\{y_1,y_2,\ldots,y_{2k}\},F)+d_H(F,E)< 2\varepsilon.\]
Since $\pi$ is not a homeomorphism, for every $1\leq i<j\leq 2k $ there exists some point in $R_\pi^{(2k)}$ such that
its $i$'th coordinate is different from the $j$'th.
Then $y_1, y_2,\ldots, y_{2k}$ are pairwise distinct.
For $i=1,\dotsc,k$, $d(y_i,y_{k+i})\leq d(y_i,x_i)+d(x_i,y_{k+i})<\frac{1}{n}$,
then $\{y_1,y_2,\ldots,y_{2k}\}\in L_n(X)$. Therefore, $L_n(X)\cap H_\pi$ is dense in $H_\pi$.
\end{proof}

Let $\F$ be a Furstenberg family.
We say that the factor map $\pi\colon (X,T)\to (Y,S)$ is \emph{$\F$-mixing of all finite orders},
if $(R_\pi^{(n)}, T^{(n)})$ is $\F$-transitive for every $n\geq 2$.

\begin{thm}
Let $\pi: (X,T)\to (Y,S)$ be an intrinsic factor map and $\F$ be a Furstenberg family.
If $\pi$ is $\F$-mixing of all finite orders, then  $(H_\pi,\hat T)$ is $\F$-mixing.
\end{thm}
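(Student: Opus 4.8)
The plan is to use the characterization that $(H_\pi,\hat T)$ is $\F$-mixing if and only if it is both $\F$-transitive and weakly mixing, and to establish these two properties separately. Since by our standing convention $\F\subseteq\F_{\mathrm{inf}}$, every $\F$-transitive system is transitive; hence the hypothesis that $\pi$ is $\F$-mixing of all finite orders implies that $\pi$ is weakly mixing of all finite orders, and Lemma~\ref{lem:wm-perf} gives that $H_\pi\cap\Perf(X)$ is a dense $G_\delta$ subset of $H_\pi$. This supplies the perfect members of $H_\pi$ on which both halves of the argument rely.

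First I would prove $\F$-transitivity. Given non-empty open sets $\mathcal U,\mathcal V\subseteq H_\pi$, choose perfect members $A_0\in\mathcal U$ and $B_0\in\mathcal V$ of $H_\pi\cap\Perf(X)$, say $A_0\subseteq\pi^{-1}(c)$ and $B_0\subseteq\pi^{-1}(b)$, and pick finite subsets $Q=\{q_1,\dots,q_m\}\subseteq A_0$ and $P=\{p_1,\dots,p_m\}\subseteq B_0$ that are close to $A_0$ and $B_0$ in the Hausdorff metric $d_H$. Since all coordinates of $Q$ (resp. $P$) lie in one fiber, $(q_1,\dots,q_m)$ and $(p_1,\dots,p_m)$ are points of $R_\pi^{(m)}$. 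Setting $U=\prod_i B(q_i,\delta)\cap R_\pi^{(m)}$ and $V=\prod_i B(p_i,\eta)\cap R_\pi^{(m)}$ for small $\delta,\eta$, these are non-empty open in $R_\pi^{(m)}$, so $N(U,V)\in\F$ by hypothesis. For $k\in N(U,V)$ pick a witness $(z_1,\dots,z_m)\in U$ with $(T^k z_1,\dots,T^k z_m)\in V$; then $A:=\{z_1,\dots,z_m\}$ lies in a single fiber (so $A\in H_\pi$), is $d_H$-close to $A_0$ (so $A\in\mathcal U$), and $\hat T^k A$ is $d_H$-close to $B_0$ (so $\hat T^k A\in\mathcal V$). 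This shows $N(U,V)\subseteq N(\mathcal U,\mathcal V)$, and upward heredity of $\F$ gives $N(\mathcal U,\mathcal V)\in\F$.

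For weak mixing I would follow the scheme of the proof of Theorem~\ref{thm:Hmu-Wm-E}, with $R_\pi^{(n)}$ playing the role of $\supp(\lambda_n^\mu)$. Given open sets $\mathcal U_1,\mathcal V_1,\mathcal U_2,\mathcal V_2$, the goal is $N(\mathcal U_1,\mathcal V_1)\cap N(\mathcal U_2,\mathcal V_2)\neq\emptyset$. Using density of $H_\pi\cap\Perf(X)$ I would select perfect members of the four open sets, pass to finite $\delta$-dense subsets, and then---this is the key step---assemble a single perfect dependent set (a member of $H_\pi$, hence lying in one fiber) whose points supply a $2m$-configuration $(q_1,\dots,q_{2m})\in R_\pi^{(2m)}$ whose first block is $d_H$-close to the chosen set in $\mathcal U_1$ and whose second block is close to the chosen set in $\mathcal U_2$, together with a target $2m$-configuration of the same shape adapted to $\mathcal V_1,\mathcal V_2$. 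Transitivity of $(R_\pi^{(2m)},T^{(2m)})$ then produces a return time $k$, and splitting a witness point of $R_\pi^{(2m)}$ into its two blocks yields $A_i\in\mathcal U_i$ with $\hat T^k A_i\in\mathcal V_i$ for $i=1,2$, so $k$ lies in the required intersection. Combining the two parts with the characterization above gives that $(H_\pi,\hat T)$ is $\F$-mixing.

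The main obstacle is precisely the assembly step in the weak-mixing part: because every point of $R_\pi^{(2m)}$ has all its coordinates in a single fiber, realizing the two prescribed blocks simultaneously forces both the source and, after applying $T^k$, the target configurations to live over a common base point. Producing such a common-fiber configuration that is $d_H$-close to sets drawn from two independent open sets $\mathcal U_1,\mathcal U_2$ is where the weak mixing of all finite orders of $\pi$ together with the density of perfect dependent sets from Lemma~\ref{lem:wm-perf} must be used in full force. By contrast, the $\F$-transitivity part involves only a single tuple in a single fiber and is routine once the finite-configuration reduction is set up.
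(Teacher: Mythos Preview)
Your proposal is correct, but it is slightly more roundabout than the paper's argument. The paper does not split into ``$\F$-transitive plus weakly mixing'' via Lemma~2.2(3); instead it establishes $\F$-transitivity of the product $(H_\pi\times H_\pi,\hat T\times\hat T)$ directly. Given four non-empty open sets $\mathcal U_1,\mathcal V_1,\mathcal U_2,\mathcal V_2\subset H_\pi$, the paper carries out exactly the ``assembly step'' you describe for weak mixing---producing finite configurations $(q_{1,1},\dots,q_{1,m},q_{2,1},\dots,q_{2,m})\in R_\pi^{(2m)}$ and $(p_{1,1},\dots,p_{2,m})\in R_\pi^{(2m)}$ adapted to the $\mathcal U_i$ and $\mathcal V_i$ respectively---but then invokes the \emph{$\F$-transitivity} of $(R_\pi^{(2m)},T^{(2m)})$ rather than mere transitivity, obtaining $N(U,V)\in\F$ and hence $N(\mathcal U_1\times\mathcal U_2,\mathcal V_1\times\mathcal V_2)\in\F$ in one stroke.

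In other words, your weak-mixing argument already contains all the work; if at the last step you had used $\F$-transitivity of $R_\pi^{(2m)}$ instead of plain transitivity, you would have obtained $\F$-mixing of $(H_\pi,\hat T)$ directly, making your separate $\F$-transitivity paragraph redundant. The assembly step you flag as the main obstacle is handled in the paper exactly as in the proof of Theorem~\ref{thm:Hmu-Wm-E} (choosing a single perfect element of $H_\pi$, hence a single fiber, that is $d_H$-close to the union of the two desired finite sets), so your instinct to mirror that proof is the right one.
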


\begin{proof}
We use the same idea as in the proof of Theorem~\ref{thm:Hmu-Wm-E}.
Let $U_1,V_1,U_2,V_2$ be non-empty open subsets of $H_\pi$.
Then there exist $n\in\N$, $\varepsilon>0$ and
$P_i:=\{p_{i,1},p_{i,2},\dotsc,p_{i,n}\}$
such that $B(P_i, \varepsilon)\subset V_i$ for $i=1,2$, and
\[(p_{1,1},p_{1,2},\dotsc,p_{1,n},p_{2,1},p_{2,2},\dotsc,p_{2,n})\in R_\pi^{(2n)}.\]
Similarly, there exist $m\geq n$, $\delta>0$ and $Q_i:=\{q_{i,1},q_{i,2},\dotsc,q_{i,m}\}$
such that $B(Q_i, \delta)\subset U_i$ for $i=1,2$ and
\[(q_{1,1},q_{1,2},\dotsc,q_{1,m},q_{2,1},q_{2,2},\dotsc,q_{2,m})\in R_\pi^{(2m)}.\]
Let
\[U=\prod_{j=1}^m B(q_{1,j},\delta)\times \prod_{j=1}^m B(q_{2,j},\delta)\cap R_\pi^{(2m)}\]
and
\[V=\prod_{j=1}^m B(p_{1,j},\delta)\times \prod_{j=1}^m B(p_{2,j},\delta)\cap R_\pi^{(2m)},\]
where $p_{i,n+1}=p_{i,n+2}=\dotsb=p_{i,m}=p_{i,n}$ for $i=1,2$.
Since $(R_\pi^{(2m)},T^{(2m)})$ is $\F$-transitive, $N(U,V)\in \F$.
Then the result follows from
$N(U,V)\subset N(U_1\times U_2,V_1\times V_2)$.
\end{proof}

Let $\F$ be a Furstenberg family.
We say that the factor map $\pi\colon (X,T)\to (Y,S)$ is \emph{$\F$-point mixing of all finite orders},
if $\Trans_{\F}(R_\pi^{(n)}, T^{(n)})$ is a dense $G_\delta$ subset of $R_\pi^{(n)}$ for every $n\geq 2$.

\begin{thm}\label{thm:F-point-trans-extension}
Let $\pi: (X,T)\to (Y,S)$ be an intrinsic factor map and $\F$ be a Furstenberg family.
If $\pi$ is $\F$-point mixing of all finite orders, then the collection of $\F$-mixing sets is residual  in $H_\pi$.
Furthermore, if $\pi$ is open, then there is a dense $G_\delta$ subset $Y_0$ of $Y$,
such that $\pi^{-1}(y)$ is an $\F$-mixing set for every $y\in Y_0$.
\end{thm}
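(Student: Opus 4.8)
The plan is to reduce the $\F$-mixing property of a fibre set to the $\F$-transitivity of suitable tuples in the systems $(R_\pi^{(n)},T^{(n)})$. For each $n\ge 2$ I would write $W_n=\Trans_{\F}(R_\pi^{(n)},T^{(n)})$, which by hypothesis is a dense $G_\delta$ subset of $R_\pi^{(n)}$; since $\F\subset\F_{\mathrm{inf}}$, any point of $W_n$ is an ordinary transitive point, so each $(R_\pi^{(n)},T^{(n)})$ is transitive and $\pi$ is weakly mixing of all finite orders. Lemma~\ref{lem:wm-perf} then gives that $\Perf(X)\cap H_\pi$ is a dense $G_\delta$ subset of $H_\pi$. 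Because $R_\pi^{(n)}$ is closed in $X^n$, each $W_n$ is $G_\delta$ in $X^n$, so Proposition~\ref{prop:G-delta} shows $\mathbb{D}(W_n)$ is $G_\delta$ in $2^X$. The heart of everything is the elementary reduction: \emph{every perfect $\{W_n\}$-dependent set is $\F$-mixing}.

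To verify this core step I would fix $k$ and open sets $U_1,\dots,U_k,V_1,\dots,V_k$ meeting such a set $A$. As $A$ is perfect I can pick pairwise distinct $x_i\in U_i\cap A$, and $\{W_n\}$-dependence gives $(x_1,\dots,x_k)\in W_k$, an $\F$-transitive point of $(R_\pi^{(k)},T^{(k)})$. The box $V=V_1\times\dots\times V_k$ meets $R_\pi^{(k)}$ (all $V_i$ meet the single fibre containing $A$), so $N\bigl((x_1,\dots,x_k),V\cap R_\pi^{(k)}\bigr)\in\F$; invariance of $R_\pi^{(k)}$ identifies this entering-time set with $\bigcap_{i=1}^k N(x_i,V_i)$, which is contained in $\bigcap_{i=1}^k N(U_i\cap A,V_i)$. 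Hereditary upwardness of $\F$ then forces $\bigcap_{i=1}^k N(U_i\cap A,V_i)\in\F$, so $A$ is $\F$-mixing.

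It remains to make $\mathbb{D}(W_n)\cap H_\pi$ dense in $H_\pi$, and here I would follow the scheme of Lemma~\ref{lem:wm-perf}: approximate a given $E\in H_\pi$ by a finite subset of a single fibre having at least $n$ points, regard it (repeating coordinates if necessary) as a tuple in some $R_\pi^{(N)}$ with $N\ge n$, and use density of $W_N$ to perturb it to an $\F$-transitive point $(y_1,\dots,y_N)$ with pairwise distinct coordinates, the distinctness coming from transitivity together with $\pi$ being intrinsic. The decisive observation is that projecting onto any $n$ of the coordinates is a factor map $R_\pi^{(N)}\to R_\pi^{(n)}$, and $\F$-transitivity is preserved under factor maps; hence $(y_{i_1},\dots,y_{i_n})\in W_n$ for every choice of indices, so $\{y_1,\dots,y_N\}$ is $W_n$-dependent and Hausdorff-close to $E$. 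Thus each $\mathbb{D}(W_n)\cap H_\pi$ is dense $G_\delta$ in the compact space $H_\pi$, and combining these with $\Perf(X)\cap H_\pi$ via the Baire theorem makes $\mathbb{D}(\{W_n\})\cap\Perf(X)\cap H_\pi$ residual; by the core step every one of its elements is $\F$-mixing, giving the first assertion.

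For the open case I would deduce the full-fibre statement from Theorem~\ref{lem:Ulam}. When $\pi$ is open, the projection $q_n\colon R_\pi^{(n)}\to Y$, $(x_1,\dots,x_n)\mapsto\pi(x_1)$, is an open continuous surjection, since a basic open box $\bigl(\prod_i O_i\bigr)\cap R_\pi^{(n)}$ maps onto $\bigcap_i\pi(O_i)$, which is open. Applying Theorem~\ref{lem:Ulam} to $q_n$ and the dense $G_\delta$ set $W_n$ yields a dense $G_\delta$ set $Y_n\subset Y$ with $W_n\cap(\pi^{-1}(y))^n$ dense $G_\delta$ in $q_n^{-1}(y)=(\pi^{-1}(y))^n$ for $y\in Y_n$; I then set $Y_0=\bigcap_{n\ge 2}Y_n$, again dense $G_\delta$. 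For $y\in Y_0$ and $A=\pi^{-1}(y)$, density of $W_k\cap A^k$ in $A^k$ lets me rerun the core computation verbatim, choosing $(x_1,\dots,x_k)\in W_k\cap A^k$ with $x_i\in U_i$, so $A$ is $\F$-mixing (it has at least two points because $W_2\cap A^2$ is non-empty and lies off the diagonal). The main obstacle throughout is the density step: one must see that a single high-order $\F$-transitive tuple simultaneously witnesses $W_n$-dependence for all $n$, which rests precisely on the preservation of $\F$-transitivity under coordinate projections, with the openness of $q_n$ playing the analogous role in the open case.
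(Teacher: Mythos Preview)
Your proposal is correct and follows essentially the same route as the paper: both isolate the residual class $\mathbb{D}(\{W_n\})\cap\Perf(X)\cap H_\pi$ (the paper calls the $W_n$'s $R_n$ and wraps this class in a slightly larger set $Q_\pi$), verify that every member is $\F$-mixing via an $\F$-transitive tuple, and in the open case apply Theorem~\ref{lem:Ulam} to the projections $R_\pi^{(n)}\to Y$. Your explicit justification that coordinate projections $R_\pi^{(N)}\to R_\pi^{(n)}$ are factor maps preserving $\F$-transitivity is exactly the point the paper uses implicitly when it asserts $\{r_1,\dots,r_k\}\in\mathbb{D}(\{R_n\})$.
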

\begin{proof}
For every $n\geq 2$, let $R_{n}=\Trans_{\F}(R_\pi^{(n)}, T^{(n)})$.
Denote
\[Q_\pi=\{A\in H_\pi(X,T)\cap \Perf(X)\colon \forall \varepsilon>0,\ \exists E\in\mathbb{D}(R_n)\ \text{s.t. }
E\subset A, d_H(E,A)<\varepsilon\}.\]
By Lemma~\ref{lem:wm-perf},  $H_\pi\cap \Perf(X)$ is a dense $G_\delta$ subset of $H_\pi$.
For every $n\geq 2$, $R_{n}$ is a dense $G_\delta$ subset of $R_\pi^{(n)}$, and then
it is a $G_\delta$ subset of $X^n$ since $R_\pi^{(n)}$ is closed in $X^n$.
By Proposition~\ref{prop:G-delta}, $\mathbb{D}(\{R_{n}\})$ is a $G_\delta$ subset of $2^X$.
For every $H\in H_\pi$ and $\varepsilon>0$,
there exists a perfect set $H'\in H_\pi$ such that $d_H(H,H')<\varepsilon$.
Then there is a finite subset $H''=\{h_1,h_2,\dotsc,h_k\}\subset H'$ such that $k\geq 2$ and
$d_H(H',H'')<\varepsilon$.
Since $(h_1,h_2,\dotsc,h_k)\in R_\pi^{(k)}$, there exists $(r_1,r_2,\dotsc,r_k)\in R_k$
such that $\max\{d(h_i,r_i)\colon 1\leq i\leq k\}<\varepsilon$.
Then $\{r_1,r_2,\dotsc,r_k\}\in \mathbb{D}(\{R_{n}\})$ and $d_H(\{r_1,r_2,\dotsc,r_k\}, H)<3\varepsilon$,
which imply that $\mathbb{D}(\{R_{n}\})$ is dense in $H_\pi$.
Then $Q_{\pi}$ is residual in $H_\pi$, since it contains $\mathbb{D}(\{R_{n}\})\cap \Perf(X)$.
We are going to show that every element in $Q_\pi$ is an $\F$-mixing set.

Let $A\in Q_\pi$.
Fix $n\in\N$ and open subsets $U_1,U_2,\dotsc,U_n$, $V_1,V_2,\dotsc,V_n$ of $X$ intersecting $A$.
Let
\[U=U_1\times U_2\times \dotsb \times U_n \text{ and } V=V_1\times V_2\times \dotsb \times V_n.\]
Since $A$ is perfect, there are $n$ distinct points $x_1,x_2,\dotsc,x_n\in A$ such that
$(x_1,x_2,\dotsc,\allowbreak x_n)\in U$.
By the construction of $Q_\pi$,
we can choose $(x_1,x_2,\dotsc,x_n)$ to be an $\F$-transitive point in $(R^{(n)}_\pi,T^{(n)})$.
Then $N((x_1,x_2,\dotsc,x_n),V)\in\F$ and
\[N((x_1,x_2,\dotsc,x_n),V)\subset \bigcap_{i=1}^n N(U_i\cap A,V_i)\in \F,\]
which imply that $A$ is $\F$-mixing.

Now assume that $\pi$ is open.
Then for every $n\geq 2$, $\pi^{(n)}\colon R^{(n)}_\pi\to Y$, $(x_1,\dotsc,x_n)\mapsto \pi(x_1)$ is open.
By Theorem~\ref{lem:Ulam}, there exists a dense $G_\delta$ subset $Y_n$ of $Y$ such that
for every $y\in Y_n$, $(\pi^{-1}(y))^n\cap R_n$ is a dense $G_\delta$ subset of
$(\pi^{-1}(y))^n=(\pi^{(n)})^{-1}(y)$.
Let $Y_0=\bigcap_{n=2}^\infty Y_n$.
Fix $y\in Y_0$. Then $\pi^{-1}(y)$ is perfect.
If not, let $x$ be an  isolate point in $\pi^{-1}(y)$.
Then $(x,x)$ is an isolate point in $(\pi^{-1}(y))^2$.
But $(\pi^{-1}(y))^2\cap R_2$ is dense in $(x,x)$, so $(x,x)$ is a transitive pint in $(R_\pi^{(2)},T^{(2)})$,
which implies that $\pi$ is a homeomorphism.
This is a contradiction.
Using a similar argument for $A$, we can show that $\pi^{-1}(y)$ is $\F$-mixing.
\end{proof}

\begin{cor}
Let $(X,T)$ be a dynamical system.
\begin{enumerate}
\item If $(X,T)$ is a weakly mixing E-system, then
the collection of $\F_{\pubd}$-mixing sets is residual in $2^X$.
\item If $(X,T)$ is a weakly mixing M-system, then
the collection of $\F_{\ps}$-mixing sets is residual in $2^X$.
\end{enumerate}
\end{cor}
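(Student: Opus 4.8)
The plan is to apply Theorem~\ref{thm:F-point-trans-extension} to the trivial factor map $\pi\colon (X,T)\to(\{*\},\mathrm{id})$ onto the one-point system. Since $(X,T)$ is a non-trivial weakly mixing system, $\pi$ is not a homeomorphism and hence intrinsic, and one has $R_\pi^{(n)}=X^n$ and $H_\pi=2^X$ for every $n\geq 2$. Under this identification the hypothesis ``$\pi$ is $\F$-point mixing of all finite orders'' becomes exactly the statement that $\Trans_{\F}(X^n,T^{(n)})$ is a dense $G_\delta$ subset of $X^n$ for every $n\geq 2$. So for each part it suffices to verify this for the relevant family, after which Theorem~\ref{thm:F-point-trans-extension} yields residuality of the $\F$-mixing sets in $H_\pi=2^X$.

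For part (1) I would first note that $(X^n,T^{(n)})$ is transitive, because $(X,T)$, being weakly mixing, is weakly mixing of all finite orders. Moreover, choosing $\mu\in M(X,T)$ with $\supp(\mu)=X$ (such $\mu$ exists since $(X,T)$ is an E-system), the product measure $\mu^{(n)}=\mu\times\dotsb\times\mu$ is $T^{(n)}$-invariant with $\supp(\mu^{(n)})=X^n$, so $(X^n,T^{(n)})$ is again an E-system. By Lemma~\ref{thm:E-M-system}(1) this gives $\Trans_{\F_{\pubd}}(X^n,T^{(n)})=\Trans(X^n,T^{(n)})$, a dense $G_\delta$ set. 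Hence $\pi$ is $\F_{\pubd}$-point mixing of all finite orders and Theorem~\ref{thm:F-point-trans-extension} applies.

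For part (2) the argument is identical \emph{once one knows that $(X^n,T^{(n)})$ is an M-system}: Lemma~\ref{thm:E-M-system}(2) then gives $\Trans_{\F_{\ps}}(X^n,T^{(n)})=\Trans(X^n,T^{(n)})$ dense $G_\delta$, and we conclude as before. Thus the whole weight of the proof lies in showing that the $n$-fold product of a weakly mixing M-system is again an M-system. Transitivity of $(X^n,T^{(n)})$ is free from weak mixing of all orders; the real content is that the minimal points must be \emph{dense} in $X^n$.

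This density is the step I expect to be the main obstacle, and it is exactly where weak mixing is indispensable: a product of two M-systems need not have dense minimal points, since a tuple of minimal points need not be an almost periodic point of the product. I would establish it through the enveloping semigroup $E(X,T)$. Fixing an open box $W_1\times\dotsb\times W_n$, use density of minimal points in $X$ to pick minimal $a_i\in W_i$; for a minimal idempotent $u$ in a minimal left ideal of $E(X,T)$, the image $u(a_1,\dotsc,a_n)=(ua_1,\dotsc,ua_n)$ is a genuine minimal point of $(X^n,T^{(n)})$, with each $ua_i$ proximal to $a_i$. The difficulty is to keep this image inside the box in all coordinates simultaneously; weak mixing supplies the needed abundance of proximal pairs to synchronize the almost-periodic recurrence across the $n$ coordinates and arrange $ua_i\in W_i$ for all $i$ at once. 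Carrying out this synchronization — or, equivalently, invoking the known fact that a weakly mixing minimal system has dense minimal points in all its powers and reducing the general M-system case to its minimal subsystems — is the single genuinely non-formal step; everything else is routine bookkeeping around Theorem~\ref{thm:F-point-trans-extension}.
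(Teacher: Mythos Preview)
Your approach is exactly the paper's: apply Theorem~\ref{thm:F-point-trans-extension} to the factor map onto the trivial system, after checking that $(X^n,T^{(n)})$ is an E-system (resp.\ M-system) so that Lemma~\ref{thm:E-M-system} supplies the dense $G_\delta$ set of $\F$-transitive points. Part~(1) is complete and matches the paper verbatim; for part~(2) the paper simply says the proof is ``similar'', implicitly taking $(X^n,T^{(n)})$ to be an M-system.

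You have located this step correctly, but you overestimate its difficulty and misattribute the role of weak mixing. Weak mixing is needed \emph{only} for the transitivity of $(X^n,T^{(n)})$; density of minimal points in $X^n$ holds whenever minimal points are dense in $X$, with no mixing hypothesis, because a product of minimal systems always has dense minimal points. Given a box $\prod_i W_i$, pick minimal $a_i\in W_i$, set $M_i=\overline{\Orb(a_i,T)}$, and fix one minimal idempotent $u$ in the enveloping semigroup. Since each $M_i$ is minimal, $uM_i$ is $T$-invariant and contains the full orbit of any of its points, hence is dense in $M_i$; so one can choose $y_i\in M_i$ with $uy_i\in W_i$, and then $u(y_1,\dotsc,y_n)=(uy_1,\dotsc,uy_n)\in\prod_i W_i$ is a minimal point of $X^n$. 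Your proposed step --- fix $a_i\in W_i$ and try to force $ua_i\in W_i$ for the \emph{same} $a_i$ --- goes in the wrong direction and is precisely what led you to believe a weak-mixing synchronization was required. In particular, your claim that ``a product of two M-systems need not have dense minimal points'' is false: what can fail in such a product is transitivity, not density of minimal points.
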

\begin{proof}
We only prove (1), since the proof of (2) is similar.
Assume that $(X,T)$ is a weakly mixing E-system.
It is clear that for every $n\geq 2$, $(X^n,T^{(n)})$ is also an E-system.
By Theorem \ref{thm:E-M-system}, $\Trans_{\F_{\pubd}}(X^n,T^{(n)})$ is a dense $G_\delta$ subset of $X^n$.
Then the result follows from
applying Theorem~\ref{thm:F-point-trans-extension} to the factor map from $(X,T)$ to the trivial system.
\end{proof}

We can also apply Theorem~\ref{thm:F-point-trans-extension} to factors maps between minimal systems.
We say that $\pi:(X,T)\to (Y,S)$ is a \emph{strictly PI extension} if there is an
ordinal $\eta$ (which is countable when $X$ is metrizable) and
a collection $\{\pi_\alpha^\beta:(X_\beta, T_\beta)\to (X_\alpha,T_\alpha)\mid \alpha\leq\beta\leq \iota\}$
of factor maps between minimal systems such that
\begin{enumerate}
\item $X_0=Y$, $T_0=S$, $X_\iota=X$ and $T_\iota=T$;
\item $\pi^\gamma_\alpha\circ \pi^\beta_\gamma=\pi^{\beta}_\alpha$ for $\alpha\leq\gamma\leq\beta\leq\iota$;
\item
if $\beta$ is a limit ordinal then $\pi^{\beta}_\alpha$ is the inverse limit of
$\{\pi^\gamma_\alpha\mid \alpha\leq\gamma<\beta\}$;
\item $\pi^{\alpha+1}_\alpha$ is either proximal or equicontinuous (isometric when $X$ is metrizable)
 for every $\alpha<\iota$.
\end{enumerate}
We say that $\pi:(X,T)\to (Y,S)$ is a \emph{PI extension} if there is a strictly PI extension
$\theta:(Z,R)\to (Y,S)$ and a proximal extension $\eta:(Z,R)\to (X,T)$ such that
$\theta= \pi\circ\eta$,
and a minimal system $(X,T)$ is \emph{PI} if
the extension from $(X,T)$ to the trivial system is PI.
We refer the reader to \cite{AGHSY10} or \cite{EGS} for the structure of minimal systems.

\begin{thm}
Let $\pi:(X,T)\to(Y,S)$ be a factor map between minimal systems. If $\pi$ is a non-PI extension,
then there is a dense subset $Y_0$ of $Y$ such that for every $y\in Y_0$
there exists some $\F_{\ps}$-mixing subset of $\pi^{-1}(y)$.
\end{thm}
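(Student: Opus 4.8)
The plan is to combine the structure theory of minimal systems with Theorem~\ref{thm:F-point-trans-extension}. The idea is to locate inside $\pi$ a nontrivial \emph{RIC weakly mixing} subextension, to show that such an extension is $\F_{\ps}$-point mixing of all finite orders, to apply Theorem~\ref{thm:F-point-trans-extension} there, and finally to push the resulting $\F_{\ps}$-mixing fibers down to $X$ along proximal maps. First I would invoke the Ellis--Glasner--Shapiro structure theorem (see \cite{AGHSY10,EGS}). Since $\pi$ is a factor map between metric minimal systems, there is a canonical diagram consisting of proximal extensions $\sigma\colon X_\infty\to X$ and $\tau\colon Y_\infty\to Y$ together with a RIC extension $\pi_\infty\colon X_\infty\to Y_\infty$ satisfying $\pi\circ\sigma=\tau\circ\pi_\infty$, and $\pi$ is PI if and only if $\pi_\infty$ is PI. The extension $\pi_\infty$ factors canonically through its maximal PI factor as $\pi_\infty=\theta\circ\psi$, where $\psi\colon X_\infty\to V$ is RIC and weakly mixing and $\theta\colon V\to Y_\infty$ is PI; moreover $\pi_\infty$, hence $\pi$, is PI exactly when $\psi$ is trivial. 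As $\pi$ is not PI, $\psi$ is a nontrivial (intrinsic) RIC weakly mixing extension between minimal systems.

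Next I would use that a RIC weakly mixing extension is weakly mixing of all finite orders and that, being RIC, each relation system $(R_\psi^{(n)},T^{(n)})$ has a dense set of minimal points; together these say that $(R_\psi^{(n)},T^{(n)})$ is an M-system for every $n\geq 2$. By Lemma~\ref{thm:E-M-system}(2) this gives $\Trans_{\F_{\ps}}(R_\psi^{(n)},T^{(n)})=\Trans(R_\psi^{(n)},T^{(n)})$, a dense $G_\delta$ set, so $\psi$ is $\F_{\ps}$-point mixing of all finite orders. Since RIC extensions are open, the ``furthermore'' part of Theorem~\ref{thm:F-point-trans-extension} applies to $\psi$ and produces a dense $G_\delta$ subset $V_0\subset V$ such that $\psi^{-1}(v)$ is an $\F_{\ps}$-mixing set for every $v\in V_0$.

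Finally I would transport these fibers to $X$ through $\sigma$. The elementary tool is that if $A^*$ is $\F$-mixing in $X_\infty$ and $\sigma$ is a factor map with $\#\sigma(A^*)\geq 2$, then $\sigma(A^*)$ is $\F$-mixing in $X$: for open $U_i,V_i$ meeting $\sigma(A^*)$ one checks $N(\sigma^{-1}(U_i)\cap A^*,\sigma^{-1}(V_i))\subset N(U_i\cap\sigma(A^*),V_i)$, and $\F$ is hereditary upward. Taking $A^*=\psi^{-1}(v)$, which lies in a single fiber of $\pi_\infty=\theta\circ\psi$, commutativity gives $\sigma(\psi^{-1}(v))\subset\pi^{-1}(\tau\theta(v))$. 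The point needing care is the non-collapse condition $\#\sigma(\psi^{-1}(v))\geq 2$. I would handle it by considering the map $v\mapsto\sigma(\psi^{-1}(v))\in 2^X$, which is continuous because $\psi$ is open, and the set $\Sigma=\{v\in V\colon \#\sigma(\psi^{-1}(v))=1\}$, which is closed and invariant under the dynamics on $V$; minimality of $V$ forces $\Sigma=\emptyset$ or $\Sigma=V$. If $\Sigma=V$ then $\sigma$ factors through $\psi$, so $X$ becomes an intermediate factor of the PI extension $\tau\circ\theta\colon V\to Y$, whence $\pi$ would be PI, a contradiction. Thus $\Sigma=\emptyset$, and setting $Y_0=\tau\theta(V_0)$, which is dense in $Y$ since $\theta,\tau$ are surjective and $V_0$ is dense, yields for each $y\in Y_0$ an $\F_{\ps}$-mixing subset of $\pi^{-1}(y)$.

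I expect the main obstacle to be the structure-theoretic reduction together with the verification that the $n$-fold relation system of a RIC weakly mixing extension is an M-system, since this is precisely where the non-PI hypothesis is converted into the $\F_{\ps}$-point mixing input demanded by Theorem~\ref{thm:F-point-trans-extension}. Once that is in place, the descent through the proximal extension $\sigma$, including the non-collapse argument via minimality of $V$, is comparatively routine.
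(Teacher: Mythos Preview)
Your argument is correct and follows the same overall route as the paper: structure theorem $\Rightarrow$ nontrivial RIC weakly mixing extension $\Rightarrow$ each $(R^{(n)},T^{(n)})$ is an M-system (via \cite{G05} and \cite[Theorem~A.2]{AGHSY10}) $\Rightarrow$ $\F_{\ps}$-point mixing of all finite orders by Lemma~\ref{thm:E-M-system}(2) $\Rightarrow$ Theorem~\ref{thm:F-point-trans-extension} $\Rightarrow$ push the mixing fibers down to $X$ through the proximal map.

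The one genuine difference is the non-collapse step. The paper argues it directly: since $\pi$ is not PI, the RIC weakly mixing extension $\pi_\infty$ is not proximal, so $R_{\pi_\infty}^{(2)}$ contains a distal pair $(x_1,x_2)$; proximality of $\phi$ forces $\phi(x_1)\neq\phi(x_2)$, and since any transitive pair in $R_{\pi_\infty}^{(2)}$ passes near $(x_1,x_2)$, its $\phi$-image is nondegenerate. Your alternative---showing $\Sigma=\{v:\#\sigma(\psi^{-1}(v))=1\}$ is closed and invariant, invoking minimality of $V$, and then concluding that $\Sigma=V$ would make $\pi$ an intermediate factor of a PI extension---is valid but less elementary: it rests on the structural fact that factors of PI extensions are PI (equivalently, one should observe that the induced map $\sigma'\colon V\to X$ is itself proximal, since $\sigma=\sigma'\circ\psi$ is proximal, and then unwind the definition). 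The paper's distal-pair argument sidesteps this entirely. Your two-step structure diagram, with an auxiliary $Y_\infty$ sitting between $V$ and $Y$, is also harmless but redundant; the paper's formulation absorbs that layer into the single PI map $\eta$.
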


\begin{proof}
By the structure theorem of minimal systems, there exist $\phi:(X_\infty, T_\infty)\to(X,T)$,
$\pi_\infty: (X_\infty, T_\infty)\to (Y_\infty, S_\infty)$ and $\eta: (Y_\infty, S_\infty)\to (Y,S)$
such that $\phi$ is a proximal extension, $\pi_\infty$ is a weakly mixing RIC extension,
and $\eta$ is a PI-extension. As $\pi$ is not PI, then $\pi_\infty$ is non-trivial.
Now consider the following commutative diagram.
\begin{equation*}
\xymatrix{
& {X}\ar[d]_\pi & {X_\infty}\ar[l]_{\phi}\ar[d]^{\pi_\infty}\\
& {Y} &{Y_\infty}\ar[l]^\eta}
\end{equation*}

By \cite[Theorem 2.7]{G05}, the extension $\pi_\infty$ is weakly mixing of all finite orders.
By \cite[Theorem A.2]{AGHSY10},
$(R_{\pi_\infty}^{(n)}, T_\infty^{(n)})$ is an $M$-system for every $n\geq 2$.
Then by Theorem \ref{thm:F-point-trans-extension}
there exists a dense $G_\delta$ subset $Y_\infty^0$ of $Y_\infty$ such that
$\pi^{-1}_\infty(y)$ is an $\F_{\ps}$-mixing set for every $y\in Y_\infty^0$.
Since $\pi$ is not PI, $\pi_\infty$ is not proximal.
Thus, there is a distal pair $(x_1,x_2)\in R_{\pi_\infty}^{(2)}$.
This implies that $\phi(x_1)\neq\phi(x_2)$, since $\phi$ is proximal.
Fix $y\in Y^0_\infty$. There exist $z_1,z_2\in \pi^{-1}_\infty(y)$
such that $(z_1,z_2)\in \Trans(R_{\pi_\infty}^{(2)})$. Then $\phi(z_1)\neq\phi(z_2)$,
since $(x_1,x_2)\in R_{\pi_\infty}^{(2)}$ and $\phi(x_1)\neq\phi(x_2)$.
So $\phi(\pi_\infty^{-1}(y))$ is not a singleton and then it is an $\F_{\ps}$-mixing subset of $X$.
Moreover $\phi(\pi_\infty^{-1}(y))\subset \pi^{-1}(\eta(y))$.
Finally, let $Y_0=\eta(Y_\infty^0)$, then $Y_0$ satisfies the requirement.
\end{proof}

\begin{cor}
If a minimal system is not PI, then it contains some $\F_{\ps}$-mixing set.
\end{cor}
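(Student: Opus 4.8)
The plan is to deduce this corollary directly from the preceding theorem by specializing the factor map to the trivial system. Suppose $(X,T)$ is a minimal system that is not PI. Consider the factor map $\pi\colon (X,T)\to (\ast, \mathrm{id})$ from $(X,T)$ onto the one-point (trivial) system. Since $(X,T)$ is not PI, by definition the extension from $(X,T)$ to the trivial system is not a PI extension, so $\pi$ is a non-PI extension between minimal systems (the trivial system is vacuously minimal).

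First I would verify that the preceding theorem applies to this $\pi$. The theorem asserts that for a non-PI factor map between minimal systems there is a dense subset $Y_0$ of the base $Y$ such that for every $y\in Y_0$ the fiber $\pi^{-1}(y)$ contains an $\F_{\ps}$-mixing subset. Here $Y$ is a single point, so $Y_0$ is necessarily that single point, and the unique fiber $\pi^{-1}(\ast)$ is all of $X$. Therefore the theorem immediately yields an $\F_{\ps}$-mixing subset of $X$, which is exactly the claim.

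The only point requiring a little care is the degenerate status of the trivial base system: one must confirm that the structure-theorem machinery invoked in the theorem's proof does not break down when $Y$ is a point. This is standard, since the structure theorem for minimal systems is always stated relative to a (possibly trivial) factor, and the condition ``$\pi$ is not PI'' is precisely the hypothesis that guarantees the weakly mixing RIC factor $\pi_\infty$ appearing in the Furstenberg--Ellis--Glasner tower is non-trivial. I expect this verification to be entirely routine and not to present any genuine obstacle; the substance of the argument has already been carried out in the proof of the theorem.

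In summary, the corollary is simply the special case of the theorem obtained by taking the factor to be a point, and no additional work beyond this specialization is needed.
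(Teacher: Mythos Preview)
Your proposal is correct and is precisely the intended argument: the paper states the corollary without proof immediately after the theorem, and by the definition given there a minimal system is PI exactly when the factor map to the trivial system is a PI extension, so specializing $\pi$ to this map yields the result. Your remark about the structure theorem applying over a trivial base is well taken and, as you say, routine.
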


\subsection*{Acknowledgments.}
The author would like to thank  Xiangdong Ye and Feng Tan for helpful suggestions and comments.
The authors express many thanks to the anonymous referees, whose remarks resulted in substantial
improvements of the paper.

%%%%%%%%%%%%%%%%%%%%%%%%%%%%%%%%%%%%%%%%%%%%%%%%%%%%

\medskip
% The data information below will be filled by AIMS editorial staff
Received xxxx 20xx; revised xxxx 20xx.
\medskip


\begin{thebibliography}{99}
\bibitem{A97} (MR1467479)
\newblock E. Akin,
\newblock \emph{Recurrence in topological dynamics, Furstenberg families and Ellis actions}.
\newblock The University Series in Mathematics, Plenum Press, New York, 1997.

\bibitem{A04} (MR2087588)
\newblock E. Akin,
\newblock \emph{Lectures on Cantor and Mycielski sets for dynamical systems},
\newblock
Chapel Hill Ergodic Theory Workshops,  21--79, Contemp. Math., 356, Amer. Math. Soc., Providence, RI, 2004.

\bibitem{AGHSY10} (MR2718894) [10.1017/S0143385709000753]
\newblock E. Akin, E. Glasner, W. Huang, S. Shao and X.Ye,
\newblock \emph{Sufficient conditions under which a transitive system is chaotic},
\newblock Ergod. Th. and Dynam. Sys., \textbf{30} (2010),  1277--1310.

\bibitem{B92} (MR1185082)
\newblock F. Blanchard,
\newblock \emph{Fully positive topological entropy and topological mixing},
\newblock Symbolic dynamics and its applications (New Haven, CT, 1991), 95--105,
Contemp. Math., 135, Amer. Math. Soc., Providence, RI, 1992.

\bibitem{B93} (MR1254749)
\newblock F. Blanchard,
\newblock \emph{A disjointness theorem involving topological entropy},
\newblock Bull. Soc. Math. France, \textbf{121} (1993), 465--78.

\bibitem{BHMMR95} (MR1346392) [10.1017/S0143385700008579]
\newblock F. Blanchard, B. Host, A. Maass, S. Martinez and D. Rudolph,
\newblock  \emph{Entropy pairs for a measure},
\newblock Ergod. Theory Dynam. Syst., \textbf{15} (1995), 621--632.


\bibitem{BH08} (MR2358261)
\newblock F. Blanchard, W. Huang,
\newblock \emph{Entropy sets, weakly mixing sets and entropy capacity},
\newblock Discrete Contin. Dyn. Syst., \textbf{20}  (2008),  no. 2, 275--311.

\bibitem{DYZ06} (MR2191619) [10.1088/0951-7715/19/1/004]
\newblock D. Dou, X, Ye and G. Zhang,
\newblock \emph{Entropy sequence and maximal entropy sets},
\newblock Nonlinearity, \textbf{19} (2006), 53--74.

\bibitem{E70} (MR0261565)[10.1007/BF01350686]
\newblock R. Ellis,
\newblock \emph{Extending continuous functions on zero-dimensional spaces},
\newblock Math. Ann., \textbf{186} (1970), 114--122.

\bibitem{EGS} (MR0380755) [10.1016/0001-8708(75)90093-6]
\newblock R. Ellis, S. Glasner and L. Shapiro,
\newblock \emph{Proximal-Isometric Flows},
\newblock Advances in Math, \textbf{17} (1975), 213--260.

\bibitem{F81} (MR0603625)
\newblock H. Furstenberg,
\newblock \emph{Recurrence in ergodic theory and combinatorial number theory},
\newblock M. B. Porter Lectures. Princeton University Press, Princeton, N.J., 1981.

\bibitem{GH55} (MR0074810)
\newblock
W. H. Gottschalk and G. A. Hedlund,
\newblock \emph{Topological Dynamics},
\newblock Amer. Math. Soc. Collooquium Publications, Vol.36. Providence, R.I., 1955.

\bibitem{G97} (MR1489099) [10.1007/BF02773793]
\newblock E. Glasner,
\newblock \emph{A simple characterization of the set of $\mu$-entropy pairs and applications},
\newblock Israel J. Math., \textbf{192} (1997), 13--27.

\bibitem{G05} (MR2191232) [10.1007/BF02775440]
\newblock E. Glasner,
\newblock \emph{Topological weak mixing and quasi-Bohr systems},
\newblock Israel J. Math., \textbf{148} (2005), 277--304.

\bibitem{GY09} (MR2486773) [10.1017/S0143385708080309]
\newblock E. Glasner and X. Ye,
\newblock \emph{Local entropy theory},
\newblock Ergodic Theory and Dynam. Systems, \textbf{29} (2009), no. 2, 321--356.

\bibitem{G04} (MR2111980)
\newblock E. Glasner,
\newblock \emph{Classifying dynamical systems by their recurrence properties},
\newblock Topol. Methods Nonlinear Anal., \textbf{24} (2004), 21--40.

\bibitem{HSY04} (MR2069703) [10.1088/0951-7715/17/4/006]
\newblock W. Huang, S. Shao and X. Ye,
\newblock \emph{Mixing and proximal cells along a sequences},
\newblock Nonlinearity, \textbf{17} (2004), no. 4, 1245--1260.

\bibitem{HY04-2} (MR2095626) [10.1090/S0002-9947-04-03540-8]
W. Huang and X.Ye, \emph{Dynamical systems disjoint from and minimal system},
Tran. Amer. Math. Soc. \textbf{357}, (2004), no. 2, 669--694.

\bibitem{HY04} (MR2062921) [10.1017/S0143385703000543]
\newblock W. Huang and X. Ye,
\newblock\emph{Topological complexity, return times and weak disjointness},
\newblock Ergod. THero. Dyn. Syst. \textbf{24} (2004), 825--846.

\bibitem{HY06} (MR2214126) [10.1007/BF02777364]
\newblock W. Huang and X. Ye,
\newblock \emph{A local variational relation and applications}.
Israel J. Math., \textbf{151} (2006), 237--280.

\bibitem{IN99} (MR1670250)
\newblock A. Illanes and S. Nadler,
\newblock \emph{Hyperspaces},
\newblock Fundamentals and recent advances.
Monographs and Textbooks in Pure and Applied Mathematics, 216.
Marcel Dekker, Inc., New York, 1999.

\bibitem{L11} (MR2831911) [10.1016/j.topol.2011.07.013]
\newblock J. Li,
\newblock \emph{Transitive points via Furstenberg family},
\newblock Topology Appl., \textbf{158} (2011), no. 16, 2221--2231.

\bibitem{LOZ13}
\newblock J. Li, P. Oprocha and G. Zhang,
\newblock \emph{On recurrence over subsets and weak mixing},
\newblock preprint, 2013.

\bibitem{My64} (MR0173645)
\newblock J. Mycielski,
\newblock \emph{Independent sets in topological algebras},
\newblock Fund. Math., \textbf{55}, (1964), 139--147.

\bibitem{O11} (MR2825640) [10.3934/dcds.2011.31.797]
\newblock P. Oprocha,
\newblock \emph{Coherent lists and chaotic sets},
\newblock Discrete Continuous Dynam. Systems, \textbf{31} (2011), no.~3, 797--825.

\bibitem{OZ11} (MR2771654) [10.4064/sm202-3-4]
\newblock P. Oprocha and G. Zhang,
\newblock \emph{On local aspects of topological weak mixing in dimension one and beyond},
\newblock Studia Math., \textbf{202} (2011), 261--288.

\bibitem{OZ12} (MR2904065) [10.1016/j.topol.2011.04.020]
\newblock P. Oprocha and G. Zhang,
\newblock \emph{On sets with recurrence properties, their topological structure and entropy},
\newblock Top. App., \textbf{159} (2012), 1767--1777.

\bibitem{OZ13} (MR3077877) [10.1016/j.aim.2013.05.006]
\newblock P. Oprocha and G. Zhang,
\newblock \emph{On weak product recurrence and synchroniztion of return times},
\newblock Adv. Math. \textbf{244} (2013) 395--412.

\bibitem{W82} (MR0648108)
\newblock P. Walters,
\newblock An introduction to ergodic theory,
\newblock Graduate Texts in Mathematics, 79. Springer-Verlag,
New York-Berlin, 1982. ix+250 pp.

\bibitem{XY92} (MR1164918)
\newblock J. Xiong and Z. Yang,
\newblock \emph{Chaos caused by a toplogical mixing map},
\newblock Dynamical systems and related topics (Nagoya, 1990), 550-572,
Adv. Ser. Dynam. Systems, 9, World Sci. Publ., River Edge, NJ, 1991.

\end{thebibliography}
\end{document}